\theoremstyle{plain}
\newtheorem{theorem}{Theorem}[section]
\newtheorem{corollary}[theorem]{Corollary}
\newtheorem{proposition}[theorem]{Proposition}
\newtheorem{lemma}[theorem]{Lemma}
\theoremstyle{definition}
\newtheorem{example}[theorem]{Example}
\newtheorem{problem}[theorem]{Problem}
\newtheorem{definition}[theorem]{Definition}
 \DeclareMathOperator{\re}{Re\,}
 \DeclareMathOperator{\Id}{\mathrm{Id}}
 \DeclareMathOperator{\dist}{dist\,}
\DeclareMathOperator{\aconv}{aconv}
\newcommand{\K}{\mathbb{K}}
\newcommand{\C}{\mathbb{C}}
\newcommand{\R}{\mathbb{R}}
\newcommand{\N}{\mathbb{N}}
\newcommand{\norm}[1]{\ensuremath{\lVert#1\rVert}}
\newcommand{\ds}{\displaystyle}
\newcommand{\eps}{\varepsilon}
\renewcommand{\geq}{\geqslant}
\renewcommand{\leq}{\leqslant}
\begin{document}
\title{On the pointwise Bishop--Phelps--Bollob\'as property for operators}
\dedicatory{Dedicated to the memory of Joseph Diestel}

\author[Dantas]{Sheldon Dantas}
\address[Dantas]{Department of Mathematics, Faculty of Electrical Engineering, Czech Technical University in Prague, Technick\'a 2, 166 27 Prague 6, Czech Republic \newline
\href{http://orcid.org/0000-0001-8117-3760}{ORCID: \texttt{0000-0001-8117-3760}  }}
\email{\texttt{sheldon.dantas@gmail.com}}

\author[Kadets]{Vladimir Kadets}
\address[Kadets]{School of Mathematics and Computer Sciences \\
V. N. Karazin Kharkiv National University \\ pl.~Svobody~4 \\
61022~Kharkiv \\ Ukraine
\newline
\href{http://orcid.org/0000-0002-5606-2679}{ORCID: \texttt{0000-0002-5606-2679} }
}
\email{\texttt{v.kateds@karazin.ua}}

\author[Kim]{Sun Kwang Kim}
\address[Kim]{Department of Mathematics, Chungbuk National University, 1 Chungdae-ro, Seowon-Gu, Cheongju,
Chungbuk 28644, Republic of Korea \newline
\href{http://orcid.org/0000-0002-9402-2002}{ORCID: \texttt{0000-0002-9402-2002}  }}
\email{\texttt{skk@chungbuk.ac.kr}}

\author[Lee]{Han Ju Lee}
\address[Lee]{Department of Mathematics Education, Dongguk University - Seoul, 04620 (Seoul), Republic of Korea \newline
\href{http://orcid.org/0000-0001-9523-2987}{ORCID: \texttt{0000-0001-9523-2987}  }
}
\email{\texttt{hanjulee@dongguk.edu}}

\author[Mart\'{\i}n]{Miguel Mart\'{\i}n}
\address[Mart\'{\i}n]{Departamento de An\'{a}lisis Matem\'{a}tico, Facultad de
 Ciencias, Universidad de Granada, 18071 Granada, Spain \newline
\href{http://orcid.org/0000-0003-4502-798X}{ORCID: \texttt{0000-0003-4502-798X} }
 }
\email{\texttt{mmartins@ugr.es}}

\begin{abstract}
We study approximation of operators between Banach spaces $X$ and $Y$ that nearly attain their norms in a given point by operators that attain their norms at the same point. When such approximations exist, we say that the pair $(X, Y)$  has the  pointwise Bishop-Phelps-Bollob\'as property (pointwise BPB property for short). In this paper we mostly concentrate on those $X$, called universal pointwise BPB domain spaces, such that $(X, Y)$ possesses pointwise BPB property for every $Y$, and on those $Y$, called universal pointwise BPB range spaces, such that $(X, Y)$ enjoys pointwise BPB property for every uniformly smooth $X$. We show that every universal pointwise BPB domain space is uniformly convex and that $L_p(\mu)$ spaces fail to have this property when $p>2$. For universal pointwise BPB range space, we show that every simultaneously uniformly convex and uniformly smooth Banach space fails it if its dimension is greater than one. We also discuss a version of the pointwise BPB property for compact operators.
\end{abstract}

\date{September 18th, 2017. Revised June 28th, 2018.}

\thanks{The first author was supported by the project OPVVV CAAS CZ.02.1.01/0.0/0.0/16\_019/0000778. The research of the second author is done in frames of Ukrainian Ministry of Science and Education Research Program 0118U002036, and it was partially supported by the Spanish MINECO/FEDER projects MTM2015-65020-P and MTM2017-83262-C2-2-P. Third author was partially supported by Basic Science Research Program through the National Research Foundation of Korea(NRF) funded by the Ministry of Education, Science and Technology (NRF-2017R1C1B1002928). Fourth author was partially supported by Basic Science Research Program through the National Research Foundation of Korea (NRF) funded by the Ministry of Education, Science and Technology (NRF-2016R1D1A1B03934771). Fifth author partially supported by Spanish MINECO/FEDER grant MTM2015-65020-P}

\subjclass[2010]{Primary 46B04; Secondary  46B07, 46B20}
\keywords{Banach space; norm attaining operators; Bishop-Phelps-Bollob\'{a}s property}

\maketitle

\thispagestyle{plain}

\section{Introduction and Preliminaries}

Let $X$ and $Y$ be Banach spaces over the field $\K$ ($\K=\R$ or $\K=\C$). We denote by $\mathcal{L}(X,Y)$ the Banach space of all bounded linear operators from $X$ into $Y$ and by $\mathcal{K}(X,Y)$ its subspace of all compact linear operators. By $B_X$ and $S_X$, we denote the closed unit ball and the unit sphere of $X$, respectively. Our notation is standard and we will include a short list of notation and terminology in subsection \ref{subsection-notation} at the end of this introduction.

Motivated by the improvement given by Bollob\'{a}s of the classical Bishop-Phelps theorem on the denseness of norm-attaining functionals, Acosta, Aron, Garc\'{\i}a and Maestre introduced in 2008 \cite{AAGM} the so-called Bishop-Phelps-Bollob\'{a}s property as follows. A pair $(X,Y)$ of Banach spaces has the \emph{Bishop-Phelps-Bollob\'{a}s property} ({\it BPBp}, for short) if for every $\eps>0$, there exists $\eta(\eps)>0$ such that whenever $T\in \mathcal{L}(X,Y)$ with $\|T\|=1$ and $x_0\in S_X$ satisfy $\|Tx_0\|>1-\eta(\eps)$, there are $S\in \mathcal{L}(X,Y)$ with $\|S\| = 1$ and $x\in S_X$ such that $\|S\|=\|Sx\|=1$, $\|x_0-x\|<\eps$, and $\|S-T\|<\eps$. Let us observe that, trivially, if a pair $(X,Y)$ has the BPBp, then the set of all norm attaining operators from $X$ into $Y$ is dense in $\mathcal{L}(X,Y)$ (recall that $T\in \mathcal{L}(X,Y)$ is \emph{norm attaining} if there is $x\in S_X$ such that $\|T\|=\|Tx\|$). With this definition, Bollob\'{a}s' extension of the Bishop-Phelps theorem, nowadays known as the Bishop-Phelps-Bollob\'{a}s theorem, just says that the pair $(X,\K)$ has the BPBp for every Banach space $X$. Among many other results, it is known that a pair $(X,Y)$ has the BPBp when $X$ is  uniformly convex (\cite[Theorem 2.2]{ABGM} and \cite[Theorem 3.1]{KL}) or when $Y$ has Lindenstrauss property $\beta$ \cite[Theorem 2.2]{AAGM}, in particular, if $Y$ is a closed subspace of $\ell_\infty$ containing $c_0$ or if $Y$ is a finite-dimensional polyhedral space. On the other hand, for $\ell_1^2$, the $2$-dimensional space with the $\ell_1$-norm, there are Banach spaces $Y$ such that the pair $(\ell_1^2,Y)$ fails the BPBp (even though all elements in $\mathcal{L}(\ell_1^2,Y)$ attain their norms) \cite{AAGM}. Moreover, such $Y$ can be found in a way that for every Banach space $X$, the set of all norm attaining operators is dense in $\mathcal{L}(X,Y)$ \cite[Theorem 4.2]{ACKLM}. For more results and background on the BPBp, we refer the reader to the recent papers \cite{AcostaBJM2016, AMS, CGKS, Cho-Choi} and the references therein.

In the very recent paper \cite{DKL}, the following stronger version of the Bishop-Phelps-Bollob\'{a}s property was introduced (with the name of Bishop-Phelps-Bollob\'as point property).

\begin{definition}[\textrm{\cite[Definition 1.2]{DKL}}] We say that a pair $(X, Y)$ of Banach spaces has the \emph{pointwise Bishop-Phelps-Bollob\'as property} (\emph{pointwise BPB property}, for short) if given $\eps > 0$, there exists $\tilde\eta(\eps) > 0$ such that whenever $T \in \mathcal{L}(X, Y)$ with $\|T\| = 1$ and $x_0 \in S_X$ satisfy
	\begin{equation*}
	\|T(x_0)\| > 1 - \tilde\eta(\eps),	
	\end{equation*}
there is $S \in \mathcal{L}(X, Y)$ with $\|S\| = 1$ such that
\begin{equation*}
\|S(x_0)\| = 1 \qquad \text{and} \qquad \|S - T\| < \eps.	
\end{equation*}
\end{definition}

The difference between the BPBp and this new property is that, in the last one, the point $x_0$ does not move and the new operator $S$ attains its norm at it. Also observe that, by an easy change of parameters, we may consider $T \in \mathcal{L}(X, Y)$ with $\|T\| \leq 1$ in the above definition, and we will use this fact without any explicit reference. Nevertheless, the same trick does not work for the point $x_0$ (as the operator $S$ attains its norm at $x_0$), so we have to consider $x_0\in S_X$.

Another useful definitions, motivated by the corresponding ones for the BPBp given in \cite{ACKLM}, are the following.

\begin{definition}
A Banach space $X$ is said to be a \emph{universal pointwise BPB domain space} if $(X, Y)$ has the pointwise BPB property for every Banach space $Y$. A Banach space $Y$ is said to be a \emph{universal pointwise BPB range space} if $(X, Y)$ has the pointwise BPB property for every uniformly smooth Banach space $X$.
\end{definition}

As we already mentioned, the pointwise BPB property was introduced in \cite{DKL} where, among other results, it was proved that if $(X, Y)$ has the pointwise BPB property then $X$ must be uniformly smooth, that Hilbert spaces are universal pointwise BPB domain spaces, and that the class of universal pointwise BPB range spaces includes uniform algebras (in particular, $C(K)$-spaces) and those spaces with Lindenstrauss property $\beta$ (in particular, closed subspaces of $\ell_\infty$ containing the canonical copy $c_0$ or finite-dimensional polyhedral spaces).

As it was done for the BPBp in \cite{ACKLM}, it is useful to introduce the modulus of the pointwise BPB property for a pair of Banach spaces as follows.

\begin{definition}
Let $X, Y$ be Banach spaces, and let $\eps \in (0, 1)$. The \emph{modulus of the pointwise BPB property} for the pair $(X, Y)$, denoted by  $\tilde\eta(X, Y)(\eps)$, is defined as the supremum of the set consisting of $0$ and all those $\rho > 0$ such that for all $T \in S_{\mathcal{L}(X, Y)}$ and $x_0 \in S_X$ with $\|T(x_0)\| > 1 - \rho$, there is $S \in S_{\mathcal{L}(X, Y)}$ such that $\|S(x_0)\| = 1$ and $\|S - T\| < \eps$. Equivalently,
\begin{equation*}
\tilde\eta(X, Y)(\eps) = \inf \left\{ 1 - \|Tx\|\colon \ x \in S_X, \ T \in S_{\mathcal{L}(X, Y)},  \ \dist\bigl(T,\{ S\in \mathcal{L}(X, Y)\colon \|S\|=1= \|Sx\|\}\bigr) \geq \eps \right\}.
\end{equation*}
\end{definition}

It is clear from the definition that a pair $(X, Y)$ has the pointwise BPB property if and only if $\tilde\eta(X,Y)(\eps)>0$ for all $\eps \in (0, 1)$.

Our aim in this paper is to continue the study of the pointwise Bishop-Phelps-Bollob\'{a}s property initiated in \cite{DKL}.

Let us now give an account of the main results of this manuscript. In section \ref{sec:stability}, we provide some stability results for the pointwise BPB property. For instance, if $(X,Y)$ has the pointwise BPB property, then so do $(X_1,Y)$ if $X_1$ is one-complemented in $X$ and $(X,Y_1)$ if $Y_1$ is an absolute summand of $Y$ (in particular, if it is an $\ell_p$-summand). Consequently, given a universal pointwise BPB domain space $X$, there is a common function which makes all the pairs $(X,Y)$ to have the pointwise BPB property with this function for every Banach space $Y$. On the other hand, we show that such  function does not exist for any universal pointwise BPB range space. Section \ref{sect:domain} is devoted to universal pointwise BPB domain spaces. We show that they are uniformly convex and we give a control on their modulus of convexity which allows us to show that $L_p(\mu)$ spaces are not universal pointwise BPB domain spaces for $p>2$. The class of universal pointwise BPB range spaces is studied in section \ref{sec:range-spaces} where it is shown that it contains all spaces with ACK$_\rho$-structure (see Definition \ref{def-ack-str}) introduced in the very recent paper \cite{CGKS}; examples of Banach spaces with this structure are, for instance, all those with property $\beta$, uniform algebras, and some vector-valued function spaces. We also show that a universal pointwise BPB range space of dimension greater than 1 cannot be simultaneously uniformly convex and uniformly smooth, and that the pointwise BPB property is not stable by infinite $c_0$- and $\ell_p$-sums of the range space for $1\leq p \leq \infty$. Section \ref{sect:compact} is devoted to the pointwise BPB property for compact operators (the analogous definition considering only compact operators). There are many results for the pointwise BPB property which are also true for the compact operators version, and we list all of these in this section. Moreover, some results for the BPBp for compact operators from \cite{DGMM} are adapted to the pointwise version for this class of operators. Finally, in the last section, we compile some open problems.

We would like to dedicate this paper to the memory of Joe Diestel, who passed away recently. Joe wrote some books and textbooks which became classic in the study on Banach spaces theory. Among them, we may empathize the one entitled ``Geometry of Banach spaces'' \cite{Diestel}. This classical book gives a deep insight into the subject of this paper. In particular, it contains the proof of the Bishop-Phelps theorem and the properties of uniformly convex and uniformly smooth Banach spaces which are essential for us.

\subsection{Notation and terminology}\label{subsection-notation}
A Banach space $X$ (or its norm) is said to be \emph{smooth} if the norm is G\^{a}teaux differentiable at every non-zero point of $X$ or, equivalently, if for every $x\in X\setminus \{0\}$ there is only one $x^*\in S_{X^*}$ such that $x^*(x)=\|x\|$. In this case, we may consider the \emph{duality mapping} $J_X:X\setminus\{0\}\longrightarrow S_{X^*}$ defined by the formula $\langle J_X(x),x\rangle = \|x\|$ for every $x\in X$. The \emph{modulus of smoothness} of a Banach space $X$ is defined as follows:
\begin{equation*}
\rho_X (t) := \sup \left\{\frac{\|x + y\|+\|x - y\|-2}{2} \colon \|x\| = 1 \ \text{and} \ \|y\| \leq t  \right\} \qquad (0<t\leq 1).	
\end{equation*}
The space $X$ (or its norm) is said to be \emph{uniformly smooth} if $\displaystyle\lim\limits_{t\to 0}\frac{\rho_X(t)}{t}=0$ (equivalently, if the norm is Fr\'{e}chet differentiable on $S_X$ and the duality mapping is uniformly continuous in $S_X$).
The \emph{modulus of convexity} of a Banach space $X$ is given by
$$
\delta_X(\eps)=\inf\left\{1-\left\|\frac{x+y}{2}\right\|\colon x,y\in B_X,\, \|x-y\|\geq \eps\right\} \qquad (0<\eps\leq 2).
$$
The space $X$ (or its norm) is said to be \emph{uniformly convex} if $\delta_X(\eps)>0$ for all $\eps\in (0,2]$. We refer the reader to \cite[\S 9]{FHHMPZ} and \cite{Diestel} for background on uniform convexity and uniform smoothness.

Given $1\leq p \leq \infty$, a positive measure $\mu$, and a Banach space $X$, $L_p(\mu,X)$ is the Banach space of those strongly $\mu$-measurable functions which are $p$-integrable if $1\leq p<\infty$ or essentially bounded for $p=\infty$, endowed with the corresponding natural norm in each case. We write $\ell_p^n(X)=L_p(\nu,X)$, where $\nu$ is the counting measure on $\{1,\ldots,n\}$, that is, the product space $X^n$ endowed with the $p$-norm. When $X=\K$, we just write $\ell_p^n$. We also use the notation $\ell_p(X)=L_p(\nu,X)$, where $\nu$ is the counting measure on $\N$. Finally, given a family $\{Y_j\colon j\in J\}$ of Banach spaces, $\Bigl[ \bigoplus_{j \in J} Y_j \Bigr]_{c_0}$ denotes its $c_0$-sum and $\Bigl[ \bigoplus_{j \in J} Y_j \Bigr]_{\ell_p}$ denotes its $\ell_p$-sum for $1 \leq p \leq \infty$.

\section{Some stability results}\label{sec:stability}

Our aim in this section is to present some stability results for the pointwise BPB property. Most of the results are generalizations of the known corresponding results for the BPBp. But in the case of the domain spaces, the new result is much more general. Recall that a subspace $X_1$ of a Banach space $X$ is said to be \emph{one-complemented} if $X_1$ is the range of a norm-one projection on $X$.

\begin{proposition} \label{prop:abssumdomain:pointwise BPB property}
Let $X$ be a uniformly smooth Banach space, let $Y$ be an arbitrary Banach space, and let $X_1$ be a one-complemented subspace of $X$. If $(X,Y)$ has the pointwise BPB property, then so does the pair $(X_1,Y)$. Moreover, $\tilde\eta(X,Y)(\eps)\leq \tilde\eta(X_1,Y)(\eps)$ for every $\eps\in (0,1)$.
\end{proposition}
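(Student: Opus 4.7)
The plan is to lift operators from $X_1$ to $X$ via the norm-one projection, apply the hypothesis on $(X,Y)$, and then restrict back to $X_1$. This is the standard composition-with-a-projection trick, and the main things to check are that norms are preserved in both the lift and the restriction.

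Concretely, let $P : X \to X_1$ denote a norm-one projection, fix $\eps \in (0,1)$, and suppose $T_1 \in \mathcal{L}(X_1,Y)$ and $x_0 \in S_{X_1}$ satisfy $\|T_1\| = 1$ and $\|T_1 x_0\| > 1 - \tilde\eta(X,Y)(\eps)$. Define $T := T_1 \circ P \in \mathcal{L}(X,Y)$. Since $\|P\| = 1$ we have $\|T\| \leq 1$, and since $P$ is a projection onto $X_1$, the restriction $T|_{X_1}$ equals $T_1$, which forces $\|T\| = \|T_1\| = 1$. Furthermore $\|T x_0\| = \|T_1 x_0\| > 1 - \tilde\eta(X,Y)(\eps)$, so the defining property of $\tilde\eta(X,Y)(\eps)$ yields an operator $S \in \mathcal{L}(X,Y)$ with $\|S\| = 1$, $\|S x_0\| = 1$, and $\|S - T\| < \eps$.

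Now restrict back: set $S_1 := S|_{X_1}$. Then $\|S_1\| \leq \|S\| = 1$, and because $\|S_1 x_0\| = \|S x_0\| = 1$ with $x_0 \in S_{X_1}$, actually $\|S_1\| = 1$ and $S_1$ attains its norm at $x_0$. Finally, since $T|_{X_1} = T_1$,
\begin{equation*}
\|S_1 - T_1\| = \sup_{x \in B_{X_1}} \|(S - T)(x)\| \leq \|S - T\| < \eps.
\end{equation*}
This shows $(X_1,Y)$ has the pointwise BPB property with witness function bounded below by $\tilde\eta(X,Y)$, giving the claimed inequality $\tilde\eta(X,Y)(\eps) \leq \tilde\eta(X_1,Y)(\eps)$.

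There is no real obstacle: the argument is a one-line composition/restriction. The only subtle point worth flagging is that the restriction step genuinely needs the newly produced operator $S$ to attain its norm \emph{at the specific point} $x_0 \in X_1$ (rather than at some moved point, as in the classical BPBp); this is exactly what the pointwise property gives us and is why the same projection trick works so cleanly for the pointwise version. Note also that one need not separately verify that $X_1$ is uniformly smooth, since this is inherited from $X$ (indeed $\rho_{X_1} \leq \rho_X$), but this is not used in the proof itself — only the hypothesis on $(X,Y)$ is used.
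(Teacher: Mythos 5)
Your proof is correct and is essentially the same as the paper's: compose with the norm-one projection to lift the operator to $X$, apply the pointwise BPB property of $(X,Y)$ at the unchanged point $x_0$, and restrict the resulting operator back to $X_1$. The only cosmetic difference is that you verify $\|T_1\circ P\|=1$ exactly, whereas the paper simply notes $\|T_1\circ P\|\leq 1$ and invokes its earlier remark that the definition may be applied to operators of norm at most one.
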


\begin{proof}
We write $i:X_1\longrightarrow X$ for the inclusion and consider the norm-one operator $P:X\longrightarrow X_1$ such that $P\circ i=\Id_{X_1}$ (the existence of such $P$ is equivalent to the fact that $X_1$ is one-complemented in $X$). Suppose that $(X,Y)$ has the pointwise BPB property with a function $\eps\longmapsto \tilde\eta(\eps)$. Fix $\eps\in (0,1)$ and consider $T\in L(X_1,Y)$ with $\|T\|=1$ and $x_0\in S_{X_1}$ such that $\|Tx_0\|> 1 - \tilde\eta(\eps)$. Write $\widetilde{T}=T\circ P\in \mathcal{L}(X,Y)$ and observe that $\|\widetilde{T}\|\leq 1$, $\|\widetilde{T}\circ i (x_0)\|=\|Tx_0\|>1-\tilde\eta(\eps)$, so there exists $\widetilde{S}\in \mathcal{L}(X,Y)$ with $\|\widetilde{S}\|=\|\widetilde{S}\circ i(x_0)\|=1$ and $\|\widetilde{S}-\widetilde{T}\|<\eps$. Now, consider $S=\widetilde{S}\circ i\in L(X_1,Y)$ and observe that $\|S\|=\|Sx_0\|=1$ and that $\|S-T\|\leq \|\widetilde{S}-\widetilde{T}\|<\eps$.
\end{proof}

As an immediate consequence of the above result, one-complemented subspaces of universal pointwise BPB domain spaces are also universal pointwise BPB domain spaces.

\begin{corollary}\label{corollary-universalpointwise BPB property-one-complemented}
Let $X$ be a uniformly smooth Banach space and let $X_1$ be a one-complemented subspace of $X$. If $X$ is a universal pointwise BPB domain space, then so is $X_1$.
\end{corollary}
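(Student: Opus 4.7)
The statement is an immediate consequence of Proposition \ref{prop:abssumdomain:pointwise BPB property}, so the plan is essentially just to unwind the definition of a universal pointwise BPB domain space and apply the proposition fiberwise in the target variable.

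Concretely, I would fix an arbitrary Banach space $Y$ and aim to show that $(X_1, Y)$ has the pointwise BPB property. Since $X$ is a universal pointwise BPB domain space, the pair $(X, Y)$ has the pointwise BPB property by definition. The hypothesis that $X$ is uniformly smooth is assumed explicitly, and $X_1$ is one-complemented in $X$ by assumption, so the three hypotheses of Proposition \ref{prop:abssumdomain:pointwise BPB property} hold for the triple $(X, X_1, Y)$. Applying that proposition then yields that $(X_1, Y)$ has the pointwise BPB property, with modulus at least $\tilde\eta(X,Y)(\eps)$. Since $Y$ was arbitrary, $X_1$ is a universal pointwise BPB domain space.

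There is essentially no obstacle here, as all the real work has been done in Proposition \ref{prop:abssumdomain:pointwise BPB property}; the corollary is a direct quantifier swap. The only thing worth noting is that the standing hypothesis ``$X$ uniformly smooth'' in Proposition \ref{prop:abssumdomain:pointwise BPB property} is automatically satisfied: as recalled in the introduction, being a universal pointwise BPB domain space already forces uniform smoothness, but here it is stated as a hypothesis anyway, so no extra verification is required.
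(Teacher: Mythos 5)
Your argument is exactly the intended one: the paper presents this corollary as an immediate consequence of Proposition \ref{prop:abssumdomain:pointwise BPB property}, obtained by fixing an arbitrary $Y$ and applying that proposition, just as you do. The observation about uniform smoothness being automatic (but harmlessly assumed) is also consistent with the paper's remarks.
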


Let us comment that we do not know whether the analogous results for the BPBp are true. We send the reader to the very recent paper \cite{CDJM} to see some particular cases in which this is the case. We also do not know whether the density of norm attaining operators passes to one complemented subspaces of the domain space.

Another immediate consequence of Proposition \ref{prop:abssumdomain:pointwise BPB property} is the following.

\begin{corollary} Let $\{X_i\colon i \in I \}$ be a family of Banach spaces and let $Y$ be a Banach space. Put $X = \left[ \bigoplus_{i \in I} X_i \right]_{\ell_p}$ for $1 < p < \infty$. If the pair $(X, Y)$ has the pointwise BPB property, then all the pairs $(X_i, Y)$ have the pointwise BPB property. Moreover, $\tilde\eta(X,Y)(\eps)\leq \tilde\eta(X_i,Y)(\eps)$ for every $\eps\in (0,1)$ and every $i\in I$.
\end{corollary}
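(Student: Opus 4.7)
My plan is to observe that, under the natural identification, each $X_i$ sits inside $X$ as a one-complemented subspace, so the result is an immediate instance of Proposition~\ref{prop:abssumdomain:pointwise BPB property}.

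Fixing $i \in I$, I would first write down the canonical isometric inclusion $\iota_i\colon X_i \to X$ that sends $x \in X_i$ to the tuple whose $i$-th coordinate is $x$ and whose remaining coordinates vanish, together with the canonical projection $P_i\colon X \to X_i$ sending $(x_j)_{j\in I} \mapsto x_i$. The trivial bound $\|(x_j)_{j\in I}\|_p \geq \|x_i\|$ (valid for every $1 \leq p \leq \infty$) shows that both operators have norm one and that $P_i\circ \iota_i = \Id_{X_i}$; hence $X_i$ is isometrically one-complemented in $X$.

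Next, the hypothesis that $(X,Y)$ has the pointwise BPB property already forces $X$ to be uniformly smooth, by the result of \cite{DKL} recalled in the introduction. Proposition~\ref{prop:abssumdomain:pointwise BPB property}, applied with $X_1$ replaced by $X_i$, then yields at once both the pointwise BPB property of $(X_i,Y)$ and the modulus comparison $\tilde\eta(X,Y)(\eps) \leq \tilde\eta(X_i,Y)(\eps)$ for every $\eps\in(0,1)$. The restriction $1<p<\infty$ plays no active role beyond guaranteeing that the hypothesis on $(X,Y)$ can actually be met, since for $p=1$ or $p=\infty$ a nontrivial $\ell_p$-sum would fail uniform smoothness and the pointwise BPB property of $(X,Y)$ would be impossible to begin with. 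No substantive obstacle arises; this really is a formal corollary of the preceding proposition.
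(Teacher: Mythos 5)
Your argument is exactly the paper's: the corollary is stated there as an immediate consequence of Proposition~\ref{prop:abssumdomain:pointwise BPB property}, with each $X_i$ one-complemented in $X$ via the canonical inclusion and coordinate projection, and the uniform smoothness of $X$ supplied by the pointwise BPB property of $(X,Y)$. Your proof is correct and adds only the (accurate) side remark about the role of the restriction $1<p<\infty$.
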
	

Other easy consequence of Proposition \ref{prop:abssumdomain:pointwise BPB property} is the following one, which follows immediately from the fact that a Banach space $X$ is one-complemented in every $L_p(\mu,X)$ space (via conditional expectation).

\begin{corollary}
Let $X$ be a uniformly smooth Banach space, let $Y$ be an arbitrary Banach space, let $1 < p < \infty$ be given, and let $\mu$ be a positive measure. If $(L_p(\mu,X),Y)$ has the pointwise BPB property, then so does $(X,Y)$.
\end{corollary}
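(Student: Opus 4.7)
My plan is to derive the corollary directly from Proposition~\ref{prop:abssumdomain:pointwise BPB property} by exhibiting $X$ as a one-complemented subspace of $L_p(\mu,X)$. The corollary is stated so that the hypothesis of the proposition, namely that the ambient space be uniformly smooth, is automatic: since $(L_p(\mu,X),Y)$ has the pointwise BPB property, the result from \cite{DKL} recalled in the introduction tells us that $L_p(\mu,X)$ is uniformly smooth. So the only nontrivial step is producing the projection.

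To build the one-complementation, I would fix a measurable set $A$ with $0<\mu(A)<\infty$ (if no such $A$ exists then $L_p(\mu,X)=\{0\}$ and there is nothing to prove) and define
\begin{equation*}
i\colon X\longrightarrow L_p(\mu,X), \qquad i(x)=\mu(A)^{-1/p}\,\chi_A\otimes x,
\end{equation*}
and, mimicking the conditional expectation onto the trivial $\sigma$-algebra on $A$,
\begin{equation*}
P\colon L_p(\mu,X)\longrightarrow X, \qquad P(f)=\mu(A)^{\,1/p-1}\int_A f\,d\mu.
\end{equation*}
The map $i$ is a linear isometry onto its image $X_1:=i(X)$, and a direct computation gives $P\circ i=\Id_X$. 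For the norm of $P$, I would combine the triangle inequality for the Bochner integral with H\"older's inequality (for the real-valued function $\|f\|$ on $A$), which yields
\begin{equation*}
\|P(f)\|\leq \mu(A)^{\,1/p-1}\int_A\|f\|\,d\mu\leq \mu(A)^{\,1/p-1}\,\mu(A)^{\,1-1/p}\,\|f\|_{L_p(\mu,X)}=\|f\|_{L_p(\mu,X)},
\end{equation*}
so $\|P\|=1$. Consequently, $i\circ P$ is a norm-one projection of $L_p(\mu,X)$ onto $X_1$, and $X_1$ is isometric to $X$.

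With this in hand I would just invoke Proposition~\ref{prop:abssumdomain:pointwise BPB property} applied to the pair $(L_p(\mu,X),Y)$ and its one-complemented subspace $X_1\cong X$, obtaining the pointwise BPB property for $(X,Y)$ together with the expected modulus estimate $\tilde\eta(L_p(\mu,X),Y)(\eps)\leq \tilde\eta(X,Y)(\eps)$ for every $\eps\in(0,1)$. I don't anticipate any real obstacle here: the only genuine content is the standard conditional expectation argument that realizes $X$ as a $1$-complemented subspace of $L_p(\mu,X)$, and everything else is a bookkeeping application of the previous proposition.
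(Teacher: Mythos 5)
Your proposal is correct and follows exactly the route the paper intends: the paper derives this corollary in one line from Proposition~\ref{prop:abssumdomain:pointwise BPB property} together with the fact that $X$ is one-complemented in $L_p(\mu,X)$ via conditional expectation, which is precisely the projection $P$ you construct (your maps $i$ and $P$ are the conditional expectation onto the constant functions on a set $A$ of finite positive measure). You merely make explicit the isometry and norm-one computations that the paper leaves to the reader.
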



For range spaces, we do not know whether a result as in Proposition \ref{prop:abssumdomain:pointwise BPB property} exists, but we may get a similar result for subspaces which are absolute summands. Let us now recall some basic facts about absolute sums. An \emph{absolute norm} is a norm $| \cdot  |_a$ on $\R^2$ such that $|(1, 0)|_a = |(0, 1)|_a = 1$ and $|(s, t)|_a = |(|s|, |t|)|_a$ for every $s, t \in \R$. Given two Banach spaces $Y$ and $W$ and an absolute norm $|  \cdot |_a$, the \emph{absolute sum} of $Y$ and $W$ with respect to $|  \cdot  |_a$, denoted by $Y \oplus_a W$, is the Banach space $Y \times W$ endowed with the norm
\begin{equation*}
\|(y, w)\|_a = |(\|y\|, \|w\|)|_a \qquad (y \in Y, w \in W)
\end{equation*}
Examples of absolute sums are the $\ell_p$-sums for $1 \leq p \leq \infty$ associated to the $\ell_p$-norm in $\R^2$. If $X=Y\oplus_a Z$, we also say that $Y$ and $Z$ are \emph{absolute summands} of $X$.

\begin{proposition} \label{prop:abssumrange:pointwise BPB property} Let $X$ be a uniformly smooth Banach space, let $Y_1$ and $Y_2$ be arbitrary Banach spaces and let $Y=Y_1\oplus_a Y_2$ for an absolute norm $|  \cdot  |_a$. If the pair $(X, Y)$ has the pointwise BPB property, then the pairs $(X, Y_1)$ and $(X, Y_2)$ have the pointwise BPB property. Moreover, $\tilde\eta(X,Y_i)(\eps)\geq \tilde\eta(X,Y)(\eps/3)$ for every $\eps\in (0,1)$ and $i=1,2$.
\end{proposition}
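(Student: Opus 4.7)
The plan is to adapt Proposition \ref{prop:abssumdomain:pointwise BPB property} to the range side, with the isometric inclusion $j_1: Y_1 \to Y$, $y_1 \mapsto (y_1, 0)$ playing the role of the one-complementing projection used there. The map $j_1$ is indeed isometric, since $|(\|y_1\|, 0)|_a = \|y_1\|$ for every absolute norm. Given $T \in S_{\mathcal{L}(X, Y_1)}$ and $x_0 \in S_X$ with $\|T(x_0)\| > 1 - \tilde\eta(X, Y)(\eps/3)$, I would first set $\widetilde T := j_1 \circ T$, so that $\|\widetilde T\| = 1$ and $\|\widetilde T(x_0)\| = \|T(x_0)\| > 1 - \tilde\eta(X, Y)(\eps/3)$, and then apply the pointwise BPB property of $(X, Y)$ at parameter $\eps/3$ to obtain $\widetilde S \in S_{\mathcal{L}(X, Y)}$ with $\|\widetilde S(x_0)\| = 1$ and $\|\widetilde S - \widetilde T\| < \eps/3$.

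Next, decompose $\widetilde S(x) = (S_1(x), S_2(x))$ with $S_i \in \mathcal{L}(X, Y_i)$ and $\widetilde S(x_0) = (u, v)$. The monotonicity of $|\cdot|_a$ ensures that both projections $\pi_i: Y \to Y_i$ are contractive, so $\|S_1 - T\| < \eps/3$ and $\|S_2\| < \eps/3$. Subadditivity $|(s, t)|_a \leq s + t$ applied to $|(\|u\|, \|v\|)|_a = 1$ then forces $\|u\| > 1 - \|v\| > 1 - \eps/3$.

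To produce the required $S \in \mathcal{L}(X, Y_1)$ with $\|S\| = \|S(x_0)\| = 1$ and $\|S - T\| < \eps$, I would try the rank-one correction
\[
S(x) := S_1(x) + (1 - \|u\|)\, x_0^*(x)\, \frac{u}{\|u\|},
\]
where $x_0^* := J_X(x_0)$ is the unique norming functional of $x_0$, available thanks to uniform smoothness of $X$. A direct computation gives $S(x_0) = u + (1 - \|u\|)\frac{u}{\|u\|} = \frac{u}{\|u\|}$, so $\|S(x_0)\| = 1$; moreover $\|S - S_1\| \leq 1 - \|u\| < \eps/3$, so $\|S - T\| \leq \|S - S_1\| + \|S_1 - T\| < 2\eps/3 < \eps$.

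The main obstacle will be verifying $\|S\| \leq 1$ (which, combined with $\|S(x_0)\| = 1$, yields $\|S\| = 1$). The naive triangle inequality only gives $\|S\| \leq 1 + (1 - \|u\|) < 1 + \eps/3$, which is too weak. The sharper argument I would pursue exploits that, by the uniqueness of the norming functional of $x_0$ in the uniformly smooth space $X$, every norming functional $y^* \in S_{Y^*}$ of $\widetilde S(x_0)$ must satisfy $y^* \circ \widetilde S = x_0^*$ in $X^*$. Writing $y^* = (y_1^*, y_2^*)$ in the dual absolute decomposition $Y^* = Y_1^* \oplus_{a^*} Y_2^*$, this identity reads $y_1^*(S_1 x) + y_2^*(S_2 x) = x_0^*(x)$ for every $x \in X$. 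Substituting this into the formula for $S(x)$ and invoking the H\"older-type inequality for absolute norms ($|s_1 t_1 + s_2 t_2| \leq |(s_1, s_2)|_{a^*} |(t_1, t_2)|_a$) should yield the desired $\|S(x)\| \leq |(\|S_1(x)\|, \|S_2(x)\|)|_a = \|\widetilde S(x)\| \leq \|\widetilde S\| \leq 1$ for every $x \in B_X$, completing the argument.
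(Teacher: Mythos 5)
Your reduction to the sum is sound and is the standard first half of the argument (the paper itself defers the details to \cite{CDJM}): $j_1$ is isometric, the coordinate projections are contractive for any absolute norm, and the estimates $\|S_1-T\|<\eps/3$, $\|S_2\|<\eps/3$, $\|u\|>1-\eps/3$ all hold. The gap is exactly where you suspected it: the operator $S(x)=S_1(x)+(1-\|u\|)\,x_0^*(x)\,\frac{u}{\|u\|}$ need not have norm one, and the H\"older substitution cannot prove that it does. After replacing $x_0^*$ by $y^*\circ\widetilde S$, the term acting on $S_1(x)$ is the operator $\Id_{Y_1}+(1-\|u\|)\,\frac{u}{\|u\|}\otimes y_1^*$, whose norm can reach $1+(1-\|u\|)\|y_1^*\|$; H\"older then returns precisely the naive bound $\|S(x)\|\leq\|S_1(x)\|+(1-\|u\|)$ and not $\|S(x)\|\leq|(\|S_1(x)\|,\|S_2(x)\|)|_a$. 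The conclusion is in fact false. Take $X=\ell_2^3$, $Y=\R\oplus_1\R$, $x_0=e_1$, and for small $\delta>0$ let $c=\sqrt{\delta(1-\delta)}$ and $S_1=(1-\delta,0,c)$, $S_2=(\delta,0,-c)$ as functionals on $\ell_2^3$. Then $\|\widetilde S\|=\max\{\|S_1+S_2\|_2,\|S_1-S_2\|_2\}=1=\|\widetilde S(x_0)\|$, $u=1-\delta$, $v=\delta$, $\|S_2\|=\sqrt{\delta}$ is as small as you wish, and $x_0^*=e_1=S_1+S_2$; your formula yields $S=(1,0,c)$ with $\|S\|=\sqrt{1+\delta(1-\delta)}>1=|S(x_0)|$. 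So $S$ does not attain its norm at $x_0$, and renormalizing it destroys $\|S(x_0)\|=1$.

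The repair is to let a norming functional of $\widetilde S(x_0)$, rather than $J_X(x_0)$, drive the correction, and to shrink $S_1$ accordingly. Choose $y^*=(y_1^*,y_2^*)\in S_{Y^*}$ with $y^*(\widetilde S(x_0))=1$; the equality case in the H\"older inequality for absolute norms forces $y_1^*(u)=\|y_1^*\|\,\|u\|$, $y_2^*(v)=\|y_2^*\|\,\|v\|$ and $\|y_1^*\|\,\|u\|+\|y_2^*\|\,\|v\|=1$. Define
\begin{equation*}
S(x):=\|y_1^*\|\,S_1(x)+y_2^*\bigl(S_2(x)\bigr)\,\frac{u}{\|u\|}.
\end{equation*}
Then $\|S(x)\|\leq\|y_1^*\|\,\|S_1(x)\|+\|y_2^*\|\,\|S_2(x)\|\leq|(\|S_1(x)\|,\|S_2(x)\|)|_a\leq 1$, while $S(x_0)=\bigl(\|y_1^*\|\,\|u\|+\|y_2^*\|\,\|v\|\bigr)\frac{u}{\|u\|}=\frac{u}{\|u\|}$, so $\|S\|=\|S(x_0)\|=1$. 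Since $1-\|y_1^*\|\leq 1-\|y_1^*\|\,\|u\|=\|y_2^*\|\,\|v\|\leq\|S_2\|<\eps/3$, one gets $\|S-S_1\|\leq(1-\|y_1^*\|)+\|y_2^*\|\,\|S_2\|<2\eps/3$ and hence $\|S-T\|<\eps$, which is where the three thirds of $\eps$ come from. Note that smoothness of $X$ plays no role in this step; this is the shape of the argument from \cite{CDJM} that the paper invokes.
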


The proof is an adaptation of corresponding one for the BPBp given in \cite{CDJM}. On the other hand, when the absolute sum is $\ell_\infty$- or $\ell_1$-sum, then it is not needed to divide $\eps$ by $3$ in the above result since we may adapt the arguments given in \cite[Propositions 2.4 and 2.7]{ACKLM}.

In particular, we easily get the following consequence.
	
\begin{corollary}\label{corollary:sums-range} Let $X$ be a uniformly smooth Banach space and let $\{Y_j\colon j \in J \}$ be a family of Banach spaces. Put $Y = \Bigl[ \bigoplus_{j \in J} Y_j \Bigr]_{c_0}$ or $Y = \Bigl[ \bigoplus_{j \in J} Y_j \Bigr]_{\ell_p}$ for $1 \leq p \leq \infty$. If the pair $(X, Y)$ has the pointwise BPB property for some $\tilde\eta(\eps)$, then all the pairs $(X, Y_j)$ have the pointwise BPB property. Moreover, $\tilde\eta(X,Y_j)(\eps)\geq \tilde\eta(X,Y)(\eps/3)$ for every $\eps\in (0,1)$ and $j \in J$.
\end{corollary}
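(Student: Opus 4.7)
The plan is to reduce the corollary to an immediate one-line application of Proposition \ref{prop:abssumrange:pointwise BPB property}. Fix an index $j_0 \in J$; the first step is to identify $Y$ as an absolute sum of the form $Y_{j_0} \oplus_a W$, where $W$ denotes the $c_0$- or $\ell_p$-sum of the family $\{Y_j : j \in J\setminus\{j_0\}\}$ and $|\cdot|_a$ is the $\ell_\infty$-norm on $\R^2$ (in the $c_0$ and $\ell_\infty$ cases) or the $\ell_p$-norm on $\R^2$ (in the $\ell_p$ case with $1\leq p<\infty$). This identification is essentially cost-free: since $c_0$- and $\ell_p$-sums are defined coordinatewise, separating one coordinate from the rest produces exactly the absolute sum structure introduced before Proposition \ref{prop:abssumrange:pointwise BPB property}.

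Once this decomposition is in place, I will simply invoke Proposition \ref{prop:abssumrange:pointwise BPB property} applied to the summands $Y_{j_0}$ and $W$. Its conclusion directly yields that $(X, Y_{j_0})$ has the pointwise BPB property, together with the quantitative estimate $\tilde\eta(X, Y_{j_0})(\eps) \geq \tilde\eta(X, Y)(\eps/3)$ for every $\eps \in (0,1)$. Since $j_0 \in J$ was arbitrary, this is exactly the content of the corollary.

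The main obstacle, which is the actual technical work behind extracting one absolute summand, has already been absorbed into Proposition \ref{prop:abssumrange:pointwise BPB property} (adapted from the BPBp version in \cite{CDJM}); nothing new has to be done at this level. It is worth remarking that in the distinguished cases of $c_0$-, $\ell_\infty$- and $\ell_1$-sums one can improve the constant and replace $\eps/3$ by $\eps$ via the arguments in \cite[Propositions 2.4 and 2.7]{ACKLM} that were alluded to after Proposition \ref{prop:abssumrange:pointwise BPB property}; however, the uniform bound stated above is already enough to ensure that every pair $(X, Y_j)$ inherits the pointwise BPB property.
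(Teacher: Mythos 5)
Your proposal is correct and is precisely the argument the paper intends: the corollary is stated as an immediate consequence of Proposition \ref{prop:abssumrange:pointwise BPB property}, obtained by splitting off the $j_0$-th coordinate as an absolute summand $Y = Y_{j_0}\oplus_a W$ with the appropriate $\ell_\infty$- or $\ell_p$-norm on $\R^2$. The quantitative bound $\tilde\eta(X,Y_{j_0})(\eps)\geq\tilde\eta(X,Y)(\eps/3)$ then follows directly, exactly as you state.
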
	

Let us comment that for $c_0$- and $\ell_\infty$-sums, the converse of the above result also holds \cite[Proposition 2.9]{DKL}. Indeed, if $X$ is a uniformly smooth Banach space and $\{Y_j\colon j \in J \}$ is a family of Banach spaces
such that $\inf\{\tilde\eta(X,Y_j)(\eps)\colon j\in J\}>0$ for every $\eps\in(0,1)$, then the pairs $\left(X,\left[ \bigoplus_{j \in J} Y_j \right]_{c_0}\right)$ and  $\left(X,\left[ \bigoplus_{j \in J} Y_j \right]_{\ell_\infty}\right)$ have the pointwise BPB property. We will show in section \ref{sec:range-spaces} that the analogous result fails for $\ell_p$-sums with $p\geq 2$.

Another stability result in the same line is the following one whose proof is a routine adaptation of the one in \cite[Proposition~2.8]{ACKLM}.

\begin{proposition}\label{prop:X-CKY}
Let $X$ be a uniformly smooth Banach space, let $Y$ be an arbitrary Banach space, and let $K$ be a compact Hausdorff topological space. If $(X,C(K,Y))$ has the pointwise BPB property, then $(X,Y)$ has the pointwise BPB property. Moreover, $\tilde\eta(X,Y)(\eps)\geq \tilde\eta(X,C(K,Y))(\eps)$ for every $\eps\in (0,1)$.
\end{proposition}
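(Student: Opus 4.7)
The plan is to exploit the natural isometric embedding of $Y$ into $C(K,Y)$ as constant functions, together with the evaluation maps $\delta_k : C(K,Y) \longrightarrow Y$, which have norm one and left-invert the embedding. Concretely, given $T \in \mathcal{L}(X,Y)$ with $\|T\|=1$ and $x_0 \in S_X$ satisfying $\|Tx_0\| > 1 - \tilde\eta(X,C(K,Y))(\eps)$, I would define $\widetilde T \in \mathcal{L}(X, C(K,Y))$ by
\begin{equation*}
\bigl[\widetilde T(x)\bigr](k) = T(x) \qquad (x \in X,\ k \in K).
\end{equation*}
Then $\|\widetilde T\| = \|T\| = 1$ (the constant-function embedding is isometric) and $\|\widetilde T(x_0)\|_{C(K,Y)} = \|Tx_0\|_Y > 1 - \tilde\eta(X,C(K,Y))(\eps)$.

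Next, I would invoke the pointwise BPB property of $(X, C(K,Y))$ to obtain $\widetilde S \in \mathcal{L}(X, C(K,Y))$ with $\|\widetilde S\| = \|\widetilde S(x_0)\|_{C(K,Y)} = 1$ and $\|\widetilde S - \widetilde T\| < \eps$. Since $K$ is compact and the function $k \longmapsto \|\widetilde S(x_0)(k)\|_Y$ is continuous on $K$, it attains its maximum; thus there exists $k_0 \in K$ such that $\|\widetilde S(x_0)(k_0)\|_Y = 1$. I would then define $S \in \mathcal{L}(X,Y)$ to be the evaluation $S(x) := \widetilde S(x)(k_0)$.

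Finally, three routine verifications finish the argument: $\|S\| \leq \|\widetilde S\| = 1$ while $\|S(x_0)\|_Y = \|\widetilde S(x_0)(k_0)\|_Y = 1$, so $\|S\| = \|S(x_0)\|_Y = 1$; and for every $x \in B_X$,
\begin{equation*}
\|S(x) - T(x)\|_Y = \bigl\|\widetilde S(x)(k_0) - \widetilde T(x)(k_0)\bigr\|_Y \leq \|\widetilde S(x) - \widetilde T(x)\|_{C(K,Y)} \leq \|\widetilde S - \widetilde T\| < \eps,
\end{equation*}
because $\widetilde T(x)(k_0) = T(x)$ by construction. Hence $\|S - T\| < \eps$, proving that $(X,Y)$ has the pointwise BPB property with $\tilde\eta(X,Y)(\eps) \geq \tilde\eta(X,C(K,Y))(\eps)$.

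There is no real obstacle here; the only subtle point is that we need the supremum defining $\|\widetilde S(x_0)\|_{C(K,Y)}$ to be attained at some concrete $k_0 \in K$ in order to produce an operator $S$ that attains its norm \emph{exactly} at $x_0$ (and not merely up to arbitrarily small error), and this is precisely where compactness of $K$ is used. If one wanted to extend the statement beyond compact $K$ (e.g.\ to $C_b(\Omega,Y)$ with $\Omega$ only locally compact), this is the step that would break.
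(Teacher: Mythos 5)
Your proof is correct and is exactly the argument the paper has in mind: the paper does not write it out but refers to it as a routine adaptation of \cite[Proposition~2.8]{ACKLM}, which is precisely this constant-function embedding of $Y$ into $C(K,Y)$ followed by evaluation at a point $k_0\in K$ where $\|\widetilde S(x_0)(\cdot)\|_Y$ attains its maximum. Your closing remark correctly identifies compactness of $K$ as the only place where attainment (rather than approximation) of the supremum is needed.
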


As a consequence of Proposition \ref{prop:abssumrange:pointwise BPB property}, we get the following result, which is analogous to \cite[Corollary 2.2]{ACKLM} and which can actually be deduced from already mentioned results of \cite{DKL}. This will be very important in the next section.

\begin{corollary}\label{corollary:universal-eta}
If a Banach space $X$ is a universal pointwise BPB domain space, then there is a function $\tilde\eta_X: (0, 1) \longrightarrow \R^+$ such that $\tilde\eta(X, Z)(\eps)\geq \tilde\eta_X(\eps)$ for every $\eps\in (0,1)$ and every Banach space $Z$.
\end{corollary}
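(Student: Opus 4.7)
The plan is to argue by contradiction, showing that for every $\eps \in (0, 1)$ the quantity
$\tilde\eta_X(\eps) := \inf_W \tilde\eta(X, W)(\eps)$, where the infimum is taken over all Banach spaces $W$, is strictly positive. Once this is established, $\tilde\eta_X$ is by construction the desired function.

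Suppose toward a contradiction that $\tilde\eta_X(\eps_0) = 0$ for some $\eps_0 \in (0, 1)$. Then I can extract a sequence $(Z_n)_{n \in \N}$ of Banach spaces such that $\tilde\eta(X, Z_n)(\eps_0) < 1/n$ for every $n \in \N$. The key step is to absorb this countable family of ``bad'' range spaces into a single Banach space by forming the $\ell_\infty$-sum $Z := \bigl[\bigoplus_{n \in \N} Z_n\bigr]_{\ell_\infty}$. This packaging conveniently sidesteps any set-theoretic issue about quantifying over the proper class of all Banach spaces, while still keeping every $Z_n$ as an absolute summand of a genuine Banach space to which the assumption on $X$ can be applied.

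Because $X$ is a universal pointwise BPB domain space, the pair $(X, Z)$ has the pointwise BPB property, so in particular $\tilde\eta(X, Z)(\eps_0/3) > 0$. Applying Corollary \ref{corollary:sums-range} to the family $\{Z_n\}_{n \in \N}$ then yields $\tilde\eta(X, Z_n)(\eps_0) \geq \tilde\eta(X, Z)(\eps_0/3)$ for every $n$, which gives a uniform positive lower bound for $\tilde\eta(X, Z_n)(\eps_0)$ and directly contradicts the choice $\tilde\eta(X, Z_n)(\eps_0) < 1/n \to 0$.

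There is no genuinely hard step: the whole argument is a short deduction from Corollary \ref{corollary:sums-range}. The only observation to make is that a sequentially chosen set of counterexamples in varying range spaces can always be collected into the single range space $\bigl[\bigoplus_n Z_n\bigr]_{\ell_\infty}$, to which the universality hypothesis on $X$ applies in one stroke and then transfers back to each individual summand with at most a factor of $3$ loss in $\eps$.
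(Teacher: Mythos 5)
Your argument is correct and is essentially identical to the paper's own proof: both assume no uniform lower bound exists, extract a sequence of range spaces $Z_n$ with $\tilde\eta(X,Z_n)(\eps_0)\to 0$, package them into a single $\ell_p$-sum (you take $p=\infty$; the paper allows any $1\leq p\leq\infty$), apply the universality of $X$ to that sum, and transfer the bound back to each summand via Corollary \ref{corollary:sums-range} with the factor $\eps_0/3$. The only difference is cosmetic; incidentally, you cite the correct result for the transfer step, whereas the paper's text points to the domain-space proposition where it clearly means the range-space one.
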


\begin{proof}
Assume that such $\tilde\eta_X$ does not exist. Then, for some $\eps_0 > 0$, there exists a sequence of  Banach spaces $\{Y_n\colon n\in \N \}$ such that all the pairs $(X, Y_n)$ have the pointwise BPB property and $\tilde\eta(X, Y_n)(\eps_0) \longrightarrow 0$ when $n \longrightarrow \infty$. But if we consider $Y = \left[\bigoplus_{n \in \N} Y_n \right]_{\ell_p}$ for $1\leq p\leq \infty$, then the pair $(X, Y)$ has the pointwise BPB property by hypothesis, and Proposition \ref{prop:abssumdomain:pointwise BPB property} gives that $\tilde\eta(X, Y_n)(\eps_0) \geq \tilde\eta(X, Y)(\eps_0/3) > 0$, which is a contradiction.
\end{proof}

One may wonder if the analogous result for universal pointwise BPB range spaces is also true. If we try to repeat the proof, we quickly get into trouble as the $\ell_p$-sum of a family of uniformly smooth Banach spaces needs not to be uniformly smooth. Actually, we are going to prove that there is no analogous result for range spaces. We start by proving a quantitative form of the fact that Banach spaces $X$ for which $(X,\K)$ has the pointwise BPB property are uniformly smooth.

\begin{proposition} \label{pointwise BPB property:modulussmooth} Suppose that the pair $(X, \K)$ has the pointwise BPB property with some function $\tilde\eta$. Then, given $\eps\in (0,1)$,
\begin{equation*}
\frac{\rho_X(t)}{t} <2\eps + 4 \sqrt{\tilde\eta(\eps)+\eps}	
\end{equation*}
for every $t < \frac{\tilde\eta(\eps)}{2}$.
\end{proposition}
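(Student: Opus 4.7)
The plan is to pick Hahn--Banach functionals norm-attaining at $x+y$ and at $x-y$, use the pointwise BPB hypothesis at the point $x$ to convert them into near-copies that norm-attain at $x$, and combine the two resulting estimates.

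Fix arbitrary $x\in S_X$ and $y\in X$ with $\|y\|\le t$, and set $a=\|x+y\|$, $b=\|x-y\|$; the target amounts to $a+b-2<2t\bigl(2\eps+4\sqrt{\tilde\eta(\eps)+\eps}\bigr)$. By Hahn--Banach pick $T_+,T_-\in S_{X^*}$ with $T_+(x+y)=a$ and $T_-(x-y)=b$ (after unit-modulus rescalings these values are real positive). The reverse triangle inequality $\|x\pm y\|\ge 1-t$ together with $|T_\pm(y)|\le t$ forces $|T_\pm(x)|\ge 1-2t>1-\tilde\eta(\eps)$ thanks to $t<\tilde\eta(\eps)/2$, so the pointwise BPB property of the pair $(X,\K)$ applied at $x$ to $T_\pm$ produces $S_\pm\in S_{X^*}$ with $|S_\pm(x)|=1$ and $\|S_\pm-T_\pm\|<\eps$. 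By a further unit-modulus rescaling we may assume $S_\pm(x)=1$, paying at most $|e^{i\theta_\pm}-1|\le|S_\pm(x)-T_\pm(x)|+|T_\pm(x)-1|<\eps+2t$ in each $\|S_\pm-T_\pm\|$.

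The key algebraic step is the identity
\[
(T_+-T_-)(y)\;=\;T_+(y)-T_-(y)\;=\;\bigl(a-T_+(x)\bigr)-\bigl(T_-(x)-b\bigr)\;=\;a+b-T_+(x)-T_-(x).
\]
Taking real parts and using $\mathrm{Re}\,T_\pm(x)\le|T_\pm(x)|\le 1$ yields
\[
a+b-2\;\le\;\mathrm{Re}\bigl((T_+-T_-)(y)\bigr)\;\le\;\|T_+-T_-\|\cdot t,
\]
so it suffices to bound $\|T_+-T_-\|$. Triangulating through the rescaled $S_\pm$ gives $\|T_+-T_-\|<2(2\eps+2t)+\|S_+-S_-\|$, which reduces the task to a bound on $\|S_+-S_-\|$.

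The main obstacle is this bound on $\|S_+-S_-\|$: after the rescaling both satisfy $S_\pm(x)=1$ and so $(S_+-S_-)(x)=0$, but without already knowing that $X$ is smooth at $x$ the two supporting functionals may be genuinely different, with $\|S_+-S_-\|$ reaching the trivial bound $2$. To break the obstacle I would feed the normalized midpoint $\widetilde U:=\tfrac12(T_++T_-)/\|\tfrac12(T_++T_-)\|\in S_{X^*}$ back into pointwise BPB at $x$: the estimate $\widetilde U(x)\ge 1-2t>1-\tilde\eta(\eps)$ triggers the hypothesis and produces a single $S\in S_{X^*}$ with $S(x)=1$ and $\|\widetilde U-S\|<\eps$ that is simultaneously close to both $T_+$ and $T_-$. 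The square-root shape $\sqrt{\tilde\eta(\eps)+\eps}$ is what I expect to emerge here through a Bishop--Phelps--Bollob\'as-type quadratic-to-linear passage: the normalization step $U\mapsto\widetilde U$ incurs a loss of order $1-\|U\|\le 2t$, and converting the resulting squared-distance estimate on $\|S_+-S_-\|^2$ back to a linear bound via a square root yields the factor $\sqrt{\tilde\eta(\eps)+\eps}$. The most delicate task is keeping the constants in line through this nested sequence of phase rotations, normalization, and a second invocation of pointwise BPB so that exactly the announced coefficients $2$ and $4$ appear at the end.
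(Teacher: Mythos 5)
Your setup, the choice of supporting functionals at $x\pm y$, the application of the pointwise BPB property at $x$, and the identity $\re\bigl((T_+-T_-)(y)\bigr)\geq a+b-2$ all match the paper's proof. The argument diverges — and breaks — at exactly the point you flag as ``the main obstacle'': bounding the distance between the two rescaled BPB functionals. The paper resolves this not by a second invocation of the pointwise BPB property but by citing the already-known fact (Proposition 2.1 of \cite{DKL}) that the pointwise BPB property of $(X,\K)$ forces $X$ to be (uniformly) smooth; hence the unimodular multiples $\alpha v_1^*$ and $\beta v_2^*$ that take the value $1$ at $x$ are not merely close but \emph{equal}, both being $J_X(x)$. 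Your proposed substitute — applying pointwise BPB to the normalized midpoint $\widetilde U$ of $T_+$ and $T_-$ — does not close the gap: a functional $S$ with $\|S-\tfrac12(T_++T_-)\|<\eps+2t$ carries no information about $\|T_+-T_-\|$, since two functionals can be at distance $2$ while their midpoint has norm close to $1$ and lies close to $S$. Being close to the midpoint does not make $S$ ``simultaneously close to both $T_+$ and $T_-$,'' and no quadratic-to-linear passage recovers this. You should simply invoke the smoothness of $X$.

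There is also a quantitative error in the phase estimate. In the complex case, from $\re T_\pm(x)\geq 1-2t$ and $|T_\pm(x)|\leq 1$ one only gets $|T_\pm(x)-1|\leq 2\sqrt{t}$, not $2t$ (the imaginary part is only controlled at order $\sqrt{t}$), so your claimed bound $|\e^{i\theta_\pm}-1|<\eps+2t$ fails over $\C$. This is precisely where the square root in the statement originates: the paper estimates the unimodular corrections via the scalar parallelogram law, obtaining $|\alpha-1|\leq 2\sqrt{\tilde\eta(\eps)+\eps}$ from $\re v_1^*(x)>1-\tilde\eta(\eps)-\eps$ and $\alpha v_1^*(x)=1$, and the two corrections contribute the term $4\sqrt{\tilde\eta(\eps)+\eps}$ while the two BPB perturbations contribute $2\eps$. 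With the smoothness step in place and the phase estimate done this way, your skeleton becomes the paper's proof.
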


\begin{proof} Let $\eps\in (0,1)$ be given and fix $0<t<\tilde\eta(\eps)/2$. Consider $x_0 \in S_X$ and $y_0\in X$ with $\|y_0\| \leq t$. Let $u^*,w^*\in S_{X^*}$ be such that $\re u^*(x_0 + y_0) = \|x_0 + y_0\|$ and $\re w^*(x_0 - y_0) = \|x_0 - y_0\|$. Then
\begin{equation*}
\re u^*(x_0) \geq \|x_0 + y_0\| - \|y_0\| > 1 - \tilde\eta(\eps).	
\end{equation*}
Analogously, $\re w^*(x_0) > 1 - \tilde\eta(\eps)$. Since $(X, \K)$ has the pointwise BPB property with $\tilde\eta$, there are $v_1^* \in S_{X^*}$ and $v_2^* \in S_{X^*}$ such that $|v_1^*(x_0)| = |v_2^*(x_0)| = 1$, $\|v_1^* - u^*\| < \eps$ and $\|v_2^* - w^*\| < \eps$. There are $\alpha,\beta\in \K$ with $|\alpha|=|\beta|=1$ such that $\alpha v_1^*(x_0)=\beta v_2^*(x_0)=1$ so, as $X$ is (uniformly) smooth \cite[Proposition 2.1]{DKL}, $\alpha v_1^*=\beta v_2^*=J_X(x_0)$. Since
$$
\re v_1^*(x_0) \geq \re u^*(x_0) - \|v_1^*-u^*\| > 1-\tilde\eta(\eps)-\eps,
$$
it follows that
$$
\re \frac{\alpha v_1^*(x_0) + v_1^*(x_0)}{2}\geq 1 - \frac{\tilde\eta(\eps)+\eps}{2}.
$$
Now, by the parallelogram law,
\begin{align*}
\frac{|\alpha-1|^2}{4} &  = 1 - \frac{|\alpha+1|^2}{4}=1 - \left|\frac{\alpha v_1^*(x_0)+ v_1^*(x_0)}{2} \right|^2 \\ & \leq 1 -\left(1-\frac{\tilde\eta(\eps)+\eps}{2}\right)^2 \leq \tilde\eta(\eps)+\eps.
\end{align*}
It follows that $|\alpha-1|\leq 2\sqrt{\tilde\eta(\eps)+\eps}$ and, analogously, $|\beta-1|\leq 2\sqrt{\tilde\eta(\eps)+\eps}$. So, since $\alpha v_1^* = \beta v_2^*$, we have that
\begin{equation*}
\|u^* - w^*\| \leq \|u^* - v_1^*\| + |\alpha-1| + \|v_2^* - w^*\| + |\beta-1| < 2 \eps  + 4 \sqrt{\tilde\eta(\eps)+\eps}.	
\end{equation*}	
Therefore,
\begin{align*}
\|x_0 + y_0\| + \|x_0 - y_0\| - 2 &= \re u^*(x_0 + y_0) + \re w^*	(x_0 - y_0) - 2 \\ &= \re u^*(x_0) + \re w^*(x_0) + \re (u^* - w^*)(y_0) - 2 \\
&\leq \re (u^* - w^*)(y_0) \leq \|u^* - w^*\|\|y_0\| < \bigl(2 \eps  + 4 \sqrt{\tilde\eta(\eps)+\eps}\bigr) t.\qedhere
\end{align*}
\end{proof}

Observe that, as $\tilde\eta(X,\K)(\eps)$ goes to $0$ when $\eps$ goes to $0$ (unless $X$ is one-dimensional), it follows that if $(X,\K)$ has the pointwise BPB property then $X$ is uniformly smooth with a control on the modulus of smoothness of $X$ which only depends on $\tilde\eta$. Since it is possible to construct uniformly smooth  Banach spaces whose  moduli of smoothness are as bad as we want (consider $\ell_n^2$ with $n\in \N$, for instance),  there is no universal function  $\tilde\eta$ such that all pairs $(X,\K)$ with $X$ being uniformly smooth  enjoy  the pointwise BPB property with $\tilde\eta$.

\begin{corollary}\label{corollary:no-universalfunctionforrange}
Let $Y$ be a Banach space. Then for every $\eps\in (0,1)$,
$$
\inf \bigl\{ \tilde\eta(X,Y)(\eps)\colon X \text{ is a uniformly smooth Banach space}\bigr\}=0.
$$
\end{corollary}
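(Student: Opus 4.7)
The plan is to reduce the vector-valued claim to the scalar case $Y=\K$, which is essentially already contained in the paragraph immediately preceding the corollary (where Proposition \ref{pointwise BPB property:modulussmooth} combined with the existence of uniformly smooth spaces of arbitrarily bad modulus of smoothness rules out a universal modulus).

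First I would establish a transfer estimate of the form
$$
\tilde\eta(X,Y)(\eps)\leq \tilde\eta(X,\K)(c\eps)
$$
valid for every smooth Banach space $X$, every $\eps$ with $c\eps\in(0,1)$, and a universal constant $c>0$ (I expect $c=2$ or $c=3$ to suffice). Fix any $y_0\in S_Y$ and, given $f\in S_{X^*}$ and $x_0\in S_X$, consider the rank-one operator $T=f(\cdot)\,y_0\in S_{\mathcal{L}(X,Y)}$, which satisfies $\|Tx_0\|=|f(x_0)|$. If $S\in S_{\mathcal{L}(X,Y)}$ is such that $\|S(x_0)\|=1$ and $\|S-T\|<\eps$, choose (by Hahn--Banach) $y_1^*\in S_{Y^*}$ with $y_1^*(Sx_0)=1$ and set $g:=y_1^*\circ S\in X^*$; then $g(x_0)=\|g\|=1$, and a direct expansion yields
$$
\|g-f\|\leq \|S-T\|+|y_1^*(y_0)-1|.
$$
After multiplying $f$ by a suitable unimodular scalar I may assume $f(x_0)\in\R_+$, and from the identity $1-f(x_0)y_1^*(y_0)=y_1^*(S(x_0)-f(x_0)y_0)$ together with $\|S(x_0)-f(x_0)y_0\|<\eps$ and $f(x_0)$ close to $1$ one extracts $|y_1^*(y_0)-1|\lesssim \eps+(1-f(x_0))$, whence the asserted transfer.

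Next I would argue by contradiction: suppose that for some $\eps_0\in(0,1)$ one has $\tilde\eta_0:=\inf\{\tilde\eta(X,Y)(\eps_0)\colon X\text{ uniformly smooth}\}>0$. The transfer then forces $\tilde\eta(X,\K)(c\eps_0)\geq \tilde\eta_0$ uniformly in $X$, and Proposition \ref{pointwise BPB property:modulussmooth} applied with $\eps=c\eps_0$ produces a uniform upper bound on $\rho_X(t)/t$ valid for every $t<\tilde\eta_0/2$ and every uniformly smooth Banach space $X$. This contradicts the existence, pointed out just before the corollary, of uniformly smooth Banach spaces whose modulus of smoothness is arbitrarily bad on small scales (for instance $\ell_{p_n}^2$ with $p_n\to 1^+$).

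The main obstacle is the quantitative estimate of $|y_1^*(y_0)-1|$ in the transfer step: since $y_1^*$ norms $S(x_0)$ rather than $y_0$, this quantity is not automatically small and has to be extracted from the joint smallness of $\|S-T\|$ and $1-|f(x_0)|$; pinning down the precise numerical constant $c$ is the chief technical point. Once this is settled, the final contradiction is a routine combination of Proposition \ref{pointwise BPB property:modulussmooth} with the classical deterioration of the modulus of smoothness of $\ell_p^n$ as $p\to 1^+$.
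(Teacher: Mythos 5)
Your argument is essentially the paper's own: the reduction to the scalar case that you carry out by hand with rank-one operators $f(\cdot)\,y_0$ is precisely the content of \cite[Proposition 2.3]{DKL}, which the paper simply cites (obtaining the transfer with $c=2$), and your concluding contradiction via Proposition \ref{pointwise BPB property:modulussmooth} together with uniformly smooth spaces whose moduli of smoothness degenerate (such as $\ell_p^2$ with $p\to 1^+$) is exactly the paper's final step. Your quantitative estimate of $|y_1^*(y_0)-1|$ in the transfer is correct as sketched, so the proposal is sound and follows the same route.
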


\begin{proof}
It is shown in \cite[Proposition 2.3]{DKL} that when a pair $(X,Y)$ has the pointwise BPB property with a function $\eps\longmapsto \tilde\eta(\eps)$, then the pair $(X,\K)$ has the pointwise BPB property with the function $\eps\longmapsto \tilde\eta(\eps/2)$. Therefore, if for a Banach space $Y$ the infimum in the statement is not zero, then there is a universal function $\eps\longmapsto \tilde\eta'(\eps)>0$ such that for every uniformly smooth Banach space $X$ the pair $(X,\K)$ has the pointwise BPB property with the function $\tilde\eta'$. Then, Proposition \ref{pointwise BPB property:modulussmooth} gives a control on the modulus of smoothness which is valid for all uniformly smooth Banach spaces $X$ and this is impossible.
\end{proof}

\section{Universal pointwise BPB domain spaces}\label{sect:domain}

Our main goal in this section is to prove that universal pointwise BPB domain spaces are uniformly convex, providing a control on their modulus of convexity. We will extract some important consequences of this.

\begin{theorem}\label{thm1}
If $X$ is a universal pointwise BPB domain space, then $X$ is uniformly convex. Moreover, we have that
\[
\delta_{X}(\eps) \geq C\,\eps^q\qquad (0<\eps<2)
\]
for suitable $2\leq q<\infty$ and $C>0$.
\end{theorem}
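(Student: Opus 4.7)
By Corollary \ref{corollary:universal-eta}, the hypothesis furnishes a single modulus function $\tilde\eta_X:(0,1)\to\R^+$ such that every pair $(X,Z)$ satisfies the pointwise BPB property with this common $\tilde\eta_X$. Specialising to $Z=\K$ and invoking Proposition \ref{pointwise BPB property:modulussmooth} already produces a quantitative uniform smoothness estimate for $X$: the modulus $\rho_X$ has polynomial control in terms of $\tilde\eta_X$ with a square-root loss. In particular $X$ is reflexive, the duality mapping $J_X\colon S_X\to S_{X^*}$ is well defined and uniformly continuous.

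To deduce uniform convexity with polynomial modulus, fix $x,y\in S_X$ with $\|x-y\|\geq \eps$, put $m=(x+y)/2$, $\delta=1-\|m\|$, and $v=m/\|m\|$; the goal is to prove $\delta\geq C\eps^q$. Write $f=J_X(x)$, $g=J_X(y)$ and $h=J_X(v)$. From $h(m)=\|m\|=1-\delta$ one immediately has $h(x),h(y)\geq 1-2\delta$, so $h$ almost attains its norm at $x$ and at $y$. Applying the pointwise BPB property for $(X,\K)$ to $h$ at these points, and using that the supporting functional is unique at each unit vector of a smooth space, yields $\|h-f\|,\|h-g\|\leq \tilde\eta_X^{-1}(2\delta)$, and consequently
\[
\|f-g\|\leq 2\,\tilde\eta_X^{-1}(2\delta).
\]

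The complementary lower bound on $\|f-g\|$ genuinely involving $\eps$ is the step I expect to be the main technical obstacle: the naive estimate $(f-g)(x-y)=2-f(y)-g(x)\geq 4\delta$ combined with $\|x-y\|\leq 2$ only gives $\|f-g\|\geq 2\delta$, which carries no information about $\eps$. To import such information I would exploit the universality of $\tilde\eta_X$ in the vector-valued direction by constructing a suitable operator $T\colon X\to Z$ with a carefully chosen two-dimensional range $Z$ (candidates are $\ell_\infty^2$, $\ell_2^2$, or a judiciously tilted two-dimensional norm, possibly depending on the parameters $\eps$ and $\delta$) for which $T$ almost attains its norm at $v$ while every norm-one $S\in\mathcal{L}(X,Z)$ attaining its norm at $v$ is separated from $T$ by at least a polynomial function of $\eps$. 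For the $\ell_\infty^2$-construction $T(z)=(f(z),g(z))$, for example, any such $S$ must have a coordinate functional proportional to $h=J_X(v)$, which via the scalar estimates above couples with $\|x-y\|\geq\eps$ to produce the required separation. The pointwise BPB property then forces $\tilde\eta_X$ of this polynomial quantity to be bounded above by the slack of $T$ at $v$, itself controllable in terms of $\delta$.

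Combining the two bounds on $\|f-g\|$ with the polynomial control on $\tilde\eta_X^{-1}$ furnished by the first paragraph (ultimately from the square-root loss in Proposition \ref{pointwise BPB property:modulussmooth}) produces the announced inequality $\delta\geq C\eps^q$. The exponent $q$ must be at least $2$ because of this square-root loss, which also explains why Hilbert space (where $q=2$ is attained) is the critical extremal case.
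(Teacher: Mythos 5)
There is a genuine gap at exactly the point you flag as ``the main technical obstacle'': you never obtain a lower bound on anything in terms of $\eps$, and the vector-valued construction you sketch to get one would not work. For the $\ell_\infty^2$ candidate $T=(f,g)$, an operator $S=(s_1,s_2)$ with $\|S\|=\|Sv\|_\infty=1$ only needs \emph{one} coordinate to be $\pm J_X(v)=\pm h$; the other coordinate is unconstrained. Hence $S=(h,g)$ is a norm-one operator attaining its norm at $v$ with $\|S-T\|=\|h-f\|$, which by your own upper bound is at most roughly $\tilde\eta_X^{-1}(2\delta)\to 0$ as $\delta\to 0$, independently of $\eps$. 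So the distance from $T$ to the norm-attaining-at-$v$ operators is small whenever $\delta$ is small, no contradiction arises, and no inequality coupling $\delta$ to $\eps$ is produced. The other candidates are left unspecified, so the argument does not close. A secondary point: the exponent $q$ in the statement does not come from the square-root loss in Proposition \ref{pointwise BPB property:modulussmooth}, as you suggest in the last paragraph.

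The paper's route supplies precisely the missing ingredient by choosing the range space to be a \emph{renorming of $X$ itself}. Since $X$ is uniformly smooth it is superreflexive, so by Pisier's theorem it admits an equivalent norm $|||\cdot|||$ satisfying a power-type convexity inequality with exponent $q$. One forms $\|x\|_m=(\|x\|^q+\frac1m|||x|||^q)^{1/q}$, whose modulus of convexity is bounded below by $c_m\eps^q$ (Lemma \ref{lem1}), and applies the universal pointwise BPB property (via Corollary \ref{corollary:universal-eta}) to the normalized identity $T=\Id/\|\Id\|\colon (X,\|\cdot\|)\to(X,\|\cdot\|_{m_0})$ at an arbitrary $x\in S_X$. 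The resulting $S$ with $\|S\|=\|Sx\|_{m_0}=1$ satisfies $\|S-\Id\|<1/4$, hence is an isomorphism with $\|S^{-1}\|<4/3$ (Lemma \ref{lemma:norm_of_the_inverse}), and $S^*y^*=J_X(x)$ for the functional $y^*$ supporting $Sx$. This transfers the power-type uniform convexity of $\|\cdot\|_{m_0}$ back to $X$: if $\re\langle J_X(x),z\rangle>1-\delta_{X_{m_0}}(3\eps/8)$ then $\|x-z\|<\eps/2$, and Lemma \ref{lemma-estimation-modulus-of-convexity} converts this into $\delta_X(\eps)\geq\frac12\delta_{X_{m_0}}(3\eps/8)\geq C\eps^q$. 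If you want to salvage your plan, this is the idea to import: the universality must be exploited against a uniformly convex target isomorphic to $X$, not against a two-dimensional quotient.
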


Prior to provide the proof of the theorem, we need three preliminary results. The first one is the following lemma which is useful to compute the modulus of convexity of certain types of renormings.

\begin{lemma}\label{lem1}
Let $(X, \|\cdot\|)$ be a Banach space. Suppose that there are an equivalent norm $||| \cdot |||$ on $X$,  $0<\delta\leq 1$, $2\leq q<\infty$, and $c>0$, such that $||| x||| \leq \|x\| \leq c|||x|||$ and
\[
\left( \frac{|||x+y|||}{2}\right)^q+\delta \left( \frac{|||x-y|||}{2}\right)^q \leq \frac{|||x|||^q+|||y|||^q}{2}
\]
for all $x, y\in X$. For each $n \in \N$, define on $X$ the equivalent norm
\begin{equation*}
\|x\|_n := \left( \|x\|^q + \frac{1}{n}|||x|||^q \right)^{1/q} \qquad (x \in X)
\end{equation*}
and let $X_n = (X, \| \cdot \|_n)$. Then
\[
\delta_{X_n}(\eps) \geq 1- \left(1-\frac{\delta \eps^q}{2^q(nc^q + 1)}\right)^{1/q}\geq \frac{\delta \eps^q}{q2^q(nc^q+1)}\qquad (0<\eps<2).
\]
\end{lemma}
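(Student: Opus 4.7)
The plan is to estimate, for arbitrary $x,y\in B_{X_n}$ with $\|x-y\|_n\geq\eps$, the quantity $\|(x+y)/2\|_n^{\,q}$ from above, and then extract $\delta_{X_n}(\eps)$ by taking a $q$-th root. The key ingredients are three: (i) the pointwise convexity of $t\mapsto t^q$ on $[0,\infty)$ for $q\geq 1$, which gives $\|(x+y)/2\|^q\leq(\|x\|^q+\|y\|^q)/2$; (ii) the hypothesized $q$-uniform convexity of $|||\cdot|||$, which gives an extra negative term $-\delta (|||(x-y)/2|||)^q$ on top of the midpoint of $|||x|||^q$ and $|||y|||^q$; (iii) the right-hand inequality $\|\cdot\|\leq c\,|||\cdot|||$, which will be used backwards to turn $|||x-y|||$ into something controllable by $\|x-y\|_n$.

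First I would add up the two pointwise bounds from (i) and (ii), each weighted correctly in the definition of $\|\cdot\|_n$, to obtain
\begin{equation*}
\left\|\tfrac{x+y}{2}\right\|_n^{\,q}
\leq \frac{\|x\|_n^{\,q}+\|y\|_n^{\,q}}{2}-\frac{\delta}{n}\left(\left|||\tfrac{x-y}{2}|||\right.\right)^{q}
\leq 1-\frac{\delta}{n\cdot 2^{q}}\,|||x-y|||^{q}.
\end{equation*}
Next, since by assumption $\|x-y\|\leq c\,|||x-y|||$, one has
$$
\|x-y\|_n^{\,q}=\|x-y\|^{q}+\tfrac{1}{n}|||x-y|||^{q}\leq \bigl(c^{q}+\tfrac{1}{n}\bigr)|||x-y|||^{q},
$$
so that $|||x-y|||^{q}\geq n\|x-y\|_n^{\,q}/(nc^{q}+1)$. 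Plugging this in and using $\|x-y\|_n\geq\eps$ yields
$$
\left\|\tfrac{x+y}{2}\right\|_n^{\,q}\leq 1-\frac{\delta\,\eps^{q}}{2^{q}(nc^{q}+1)}.
$$

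Finally I would take $q$-th roots to conclude the first estimate on $\delta_{X_n}(\eps)$, and then obtain the second by Bernoulli's inequality: for $s\in[0,1]$, $(1-s/q)^{q}\geq 1-s$, hence $1-(1-s)^{1/q}\geq s/q$, applied with $s=\delta\eps^{q}/[2^{q}(nc^{q}+1)]$.

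I do not expect a serious obstacle here; the calculation is a weighted sum of two ``$q$-convexity'' inequalities plus a norm-comparison. The one point that deserves care is the direction of the norm equivalence used in each place: the left inequality $|||\cdot|||\leq\|\cdot\|$ is not invoked, whereas $\|\cdot\|\leq c|||\cdot|||$ is used precisely to control $\|x-y\|_n$ from above by $|||x-y|||$. That is really the only substantive point; the rest is bookkeeping with $q$, $n$ and $c$.
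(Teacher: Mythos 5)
Your proposal is correct and follows essentially the same route as the paper: bound $\|(x+y)/2\|_n^q$ by combining the convexity of $t\mapsto t^q$ for the $\|\cdot\|$-term with the $q$-uniform convexity hypothesis for the $|||\cdot|||$-term, then lower-bound $|||x-y|||$ via $\|x-y\|_n\leq (c^q+1/n)^{1/q}|||x-y|||$. Your explicit Bernoulli step for the second inequality is a detail the paper leaves implicit, but the argument is the same.
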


\begin{proof}
Let $0<\eps<2$ be given and suppose that $\|x\|_n = \|y\|_n =1$ and $\|x-y\|_n \geq \eps$. We then have that
\begin{equation*}
\eps \leq \|x - y\|_n \leq
\left(c^q + \frac 1n\right)^{1/q}|||x - y|||.
\end{equation*}
This implies that $|||x - y||| \geq \dfrac{\eps}{(c^q+1/n)^{1/q}}$. Applying this, the convexity of the function $t\longmapsto t^q$ for $q\geq 2$, and the hypothesis, we get that
\begin{align*}
\left( \frac{\|x + y\|_n}{2} \right)^q &= \left(\frac{\|x+y\|}{2}\right)^q + \frac{1}{n}\left(\frac{||| x + y |||}{2}\right)^q \\
&\leq \frac{\|x\|^q+\|y\|^q}{2} + \frac{1}{n} \left(  \frac{|||x|||^q + |||y|||^q}{2} - \delta \left( \frac{|||x - y|||}{2}\right)^q\right)\\
&= 1- \frac{\delta}{2^qn}|||x-y|||^q \leq 1- \frac{\delta\eps^q}{2^q(nc^q+1)}.\qedhere
\end{align*}
\end{proof}

The second preliminary result allows us to compute the norm of the inverse of some  operators.

\begin{lemma}\label{lemma:norm_of_the_inverse}
Let $X$ be a vector space endowed with two equivalent complete norms $\|\cdot\|_1$ and $\|\cdot\|_2$ such that
\begin{equation} \label{eqeqeq1}
\|x\|_1 \leq  \|x\|_2  \qquad (x\in X)
\end{equation}
and consider the identity operator $\Id:(X,\|\cdot\|_1)\longrightarrow (X,\|\cdot\|_2)$. If $S:(X,\|\cdot\|_1)\longrightarrow (X,\|\cdot\|_2)$ satisfies that $\|S - \Id\|<1/4$, then $S$ is invertible and $\|S^{-1}\|<4/3$.
\end{lemma}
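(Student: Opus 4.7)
The plan is to run a standard Neumann series argument, but twisted so as to absorb the mismatch between the norms $\|\cdot\|_1$ on the domain and $\|\cdot\|_2$ on the codomain. The key observation is that \eqref{eqeqeq1} says exactly that the identity map viewed in the \emph{opposite} direction,
\[
J \;:=\; \Id \colon (X,\|\cdot\|_2) \longrightarrow (X,\|\cdot\|_1),
\]
is bounded with $\|J\|\leq 1$. Note that $J$ is a bijection (set-theoretically it is the identity), and its set-theoretic inverse $\Id\colon (X,\|\cdot\|_1)\to (X,\|\cdot\|_2)$ is bounded as well by the open mapping theorem applied to the two complete equivalent norms. Hence $J$ is a Banach-space isomorphism.

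Next I would precompose $S$ with $J$ to land back in a single Banach space: the operator
\[
J\circ S \colon (X,\|\cdot\|_1) \longrightarrow (X,\|\cdot\|_1)
\]
satisfies $J\circ S = \Id + J\circ(S-\Id)$, and
\[
\|J\circ(S-\Id)\| \;\leq\; \|J\|\cdot\|S-\Id\| \;<\; 1\cdot \tfrac14 \;=\; \tfrac14.
\]
Since the norm of this perturbation is strictly less than $1$, the standard Neumann series $\sum_{k\geq 0}(-1)^k\bigl(J\circ(S-\Id)\bigr)^k$ converges in $\mathcal{L}((X,\|\cdot\|_1))$ and yields an inverse for $J\circ S$ with
\[
\|(J\circ S)^{-1}\| \;\leq\; \frac{1}{1-\|J\circ(S-\Id)\|} \;<\; \frac{1}{1-1/4} \;=\; \tfrac{4}{3}.
\]

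To finish, since $J$ is a bijection, $S=J^{-1}\circ(J\circ S)$ is also bijective and $S^{-1}=(J\circ S)^{-1}\circ J\colon (X,\|\cdot\|_2)\to (X,\|\cdot\|_1)$, so
\[
\|S^{-1}\| \;\leq\; \|(J\circ S)^{-1}\|\cdot \|J\| \;<\; \tfrac{4}{3}\cdot 1 \;=\; \tfrac{4}{3},
\]
which is the desired conclusion.

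I do not anticipate any real obstacle beyond keeping careful track of which operator norm is meant at each step (domain-to-codomain pairs among $(X,\|\cdot\|_1)$ and $(X,\|\cdot\|_2)$). The single conceptual trick is composing with $J$ so that the geometric series trick happens entirely inside the single Banach space $(X,\|\cdot\|_1)$; after that the bound $4/3$ drops out of the usual estimate $1/(1-r)$ with $r<1/4$.
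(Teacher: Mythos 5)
Your proof is correct and is essentially the paper's argument in mirror image: the paper precomposes with the norm-one reverse identity to get $S\circ\Id^{-1}$ acting on $(X,\|\cdot\|_2)$ and runs the Neumann series there, while you postcompose to get $J\circ S$ acting on $(X,\|\cdot\|_1)$; in both cases the bound $\|S^{-1}\|<4/3$ falls out of $1/(1-1/4)$ times the norm-one factor. (Minor remark: you do not need the open mapping theorem for the boundedness of $J^{-1}$, since the two norms are assumed equivalent.)
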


\begin{proof}
Let us denote  $X_1 = (X,\|\cdot\|_1)$,  $X_2 = (X,\|\cdot\|_2)$ and let $I_1: X_1 \longrightarrow X_1$, $I_2: X_2 \longrightarrow X_2$ be the corresponding identity operators. Denote also $S_2 = S \circ \Id^{-1}: X_2 \longrightarrow X_2 $. Condition \eqref{eqeqeq1} means that $\|\Id^{-1}\| \leq 1$,
so $\|S_2 - I_2\|  = \|(S - \Id)\Id^{-1}\| < 1/4$. Then, $S_2$ is invertible with
$S_2^{-1} = I_2 + \sum_{k=1}^\infty(I_2 - S_2)^k$. Consequently, we have $\|S_2^{-1}\| < 4/3$. To complete the proof it remains to remark that  $\|S^{-1}\| = \|\Id^{-1} S_2^{-1} \| \leq \|S_2^{-1}\| <4/3$.
\end{proof}

Finally, we provide a third preliminary result which will allow us to estimate the modulus of convexity of some Banach spaces.

\begin{lemma}\label{lemma-estimation-modulus-of-convexity}
Let $X$ be a smooth Banach space and let $\delta: (0,2)\longrightarrow \R^+$ be a function. Suppose that given $\eps \in (0, 2)$, $x, z \in S_X$ satisfying
\begin{equation*}
\re \langle J_{X}(x), z \rangle > 1 - \delta (\eps),	
\end{equation*}
one has $\|x - z\| < \frac{\eps}{2}$.
Then,
\begin{equation*}
\delta_{X} (\eps) \geq \frac{1}{2} \delta(\eps).
\end{equation*}
\end{lemma}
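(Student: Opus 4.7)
The plan is to argue by contraposition: I will fix $x, y \in S_X$ — this reduction is legitimate because the modulus of convexity is classically known to take the same value whether the infimum is taken over $B_X\times B_X$ or over $S_X\times S_X$ — satisfying $\|(x+y)/2\| > 1 - \delta(\eps)/2$, and show that $\|x-y\| < \eps$.

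The key step is to convert the hypothesis on $\|(x+y)/2\|$ into information about a single duality functional. Set $m := (x+y)/2$ and $u := m/\|m\| \in S_X$ (legitimate since $\|m\|>0$); smoothness of $X$ yields $J_X(u) \in S_{X^*}$ with $\langle J_X(u), u\rangle = 1$, whence
\[
\re \langle J_X(u), x\rangle + \re \langle J_X(u), y\rangle \;=\; 2\|m\| \;>\; 2 - \delta(\eps).
\]
Since each summand on the left is bounded above by $\|x\| = \|y\| = 1$, both real parts must individually exceed $1 - \delta(\eps)$. Now I apply the standing hypothesis twice — with $u$ playing the role of the first argument of $J_X$ and with $x$ (respectively $y$) playing the role of $z$, all three points lying in $S_X$ — to obtain $\|u-x\| < \eps/2$ and $\|u-y\| < \eps/2$; the triangle inequality closes the argument with $\|x-y\| < \eps$.

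The main subtlety is the reduction from $B_X$ to $S_X$ at the outset. A direct attempt with $x, y \in B_X$ proceeds exactly as above and shows that $\|x\|,\|y\| > 1 - \delta(\eps)$; however, applying the hypothesis to the \emph{normalized} points $\tilde x = x/\|x\|$ and $\tilde y = y/\|y\|$ only yields $\|\tilde x - \tilde y\| < \eps$, while the additive slippage $\|x - \tilde x\| = 1 - \|x\|$ is only controlled by $\delta(\eps)$, which is in general too weak to deduce $\|x - y\| < \eps$ by itself. Appealing to the standard equivalence of the ball- and sphere-definitions of $\delta_X$ sidesteps this issue cleanly, and is the only nontrivial ingredient beyond the duality-map manipulation above.
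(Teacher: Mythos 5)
Your proof is correct and is essentially the paper's own argument in contrapositive form: the paper fixes $z_1,z_2\in S_X$ with $\|z_1-z_2\|\geq\eps$, notes that for each $x\in S_X$ at least one of $\re\langle J_X(x),z_i\rangle$ is $\leq 1-\delta(\eps)$ (else two applications of the hypothesis plus the triangle inequality give $\|z_1-z_2\|<\eps$), averages to get $\re\langle J_X(x),\tfrac{z_1+z_2}{2}\rangle\leq 1-\tfrac12\delta(\eps)$, and evaluates at the normalized midpoint --- exactly your computation read backwards. The paper also silently restricts the infimum defining $\delta_X$ to $S_X\times S_X$, so your explicit appeal to the classical ball/sphere equivalence is, if anything, the more careful presentation.
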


\begin{proof}
Let $z_1, z_2 \in S_{X}$ be such that $\|z_1 - z_2\| \geq \eps$. So, for all $x \in S_{X}$, we have
\begin{equation*}
\re \langle J_{X}(x), z_1 \rangle \leq 1 - \delta (\eps) \quad \text{or} \quad \re \langle J_{X}(x), z_2 \rangle \leq 1 - \delta(\eps).	
\end{equation*}
Otherwise, we would have $\|z_1 - x\| < \frac{\eps}{2}$ and  $\|z_2 - x\| < \frac{\eps}{2}$, so $\|z_1 - z_2\| < \eps$, a contradiction. Then, for every $x \in S_{X}$, we have that
\begin{equation*}
\re \left\langle J_{X}(x), \frac{z_1 + z_2}{2} \right\rangle \leq 1 - \frac{1}{2} \delta (\eps).	
\end{equation*}
Since $X$ is smooth and the above inequality holds for all $x \in S_{X}$, we get that
\begin{equation*}
\left\| \frac{z_1 + z_2}{2} \right\| \leq 1 - \frac{1}{2} \delta (\eps),	
\end{equation*}
which implies that
$\delta_{X} (\eps) \geq \frac{1}{2} \delta(\eps).$
\end{proof}

We are now ready to prove our main result.

\begin{proof}[Proof of Theorem~\ref{thm1}]
Let $\|\cdot\|$ be the norm of $X$ and let $\tilde\eta_X$ be the universal pointwise BPB function given by Corollary \ref{corollary:universal-eta}. First of all, $X$ is uniformly smooth (see \cite[Proposition 2.1]{DKL}) and so, it is supperreflexive. We may then use a well-known result by Pisier \cite{Pi} to get that $X$ admits an equivalent uniformly convex norm $||| \cdot |||$ for which there exist $\delta > 0$ and $2 \leq q < \infty$ such that
\begin{equation}\label{equation:BPB-domain-Pisier}
\left( \frac{|||x + y|||}{2} \right)^q + \delta \left( \frac{|||x-y|||}{2}\right)^q \leq \frac{|||x|||^q+|||y|||^q}{2} \qquad \bigl(x,y \in X\bigr).
\end{equation}
We may also assume that there exists $c>0$ such that $|||x||| \leq \|x\| \leq c |||x|||$ for every $x \in X$. For each $m \in \N$, we define an equivalent norm on $X$ by
\begin{equation*}
\|x\|_m := \left(\|x\|^q + \frac{1}{m}|||x|||^q \right)^{1/q} \qquad (x \in X),	
\end{equation*}
which satisfies that
\begin{equation} \label{eq4}
\|x\| \leq \|x\|_m \leq \left(1 + \frac{1}{m} \right)^{1/q}\|x\|
\end{equation}	
for all $x \in X$ and for all $m \in \N$.

Now, consider $m_0\in \N$ such that
\begin{equation*}
\left(\frac{m_0}{1 + m_0}\right)^{1/q} > 1 - \tilde\eta_X\left( \frac{1}{8}\right) \qquad \text{and} \qquad  \frac{1}{8} +  \left(\frac{m_0+1}{m_0}\right)^{1/q} - 1 < \frac{1}{4}.	
\end{equation*}
We claim that \emph{given $\eps \in (0, 2)$, $x, z \in S_X$ satisfying that
\begin{equation*}
\re \langle J_X(x), z \rangle > 1 - \delta_{X_{m_0}} \left(\frac{3}{8} \eps \right),	
\end{equation*}
one has that $\ds \|x - z\| < \frac{\eps}{2}$.} This finishes the proof of the theorem by just using Lemma \ref{lemma-estimation-modulus-of-convexity} to get that
$$
\delta_{X} (\eps) \geq \frac{1}{2} \delta_{X_{m_0}}\left(\frac{3}{8}\eps \right)
$$
for every $\eps\in (0,2)$, and then Lemma \ref{lem1} to get the desired estimation for $\delta_{X_{m_0}}$, and so for $\delta_X$.

Let us prove the claim. Define $\Id: (X, \| \cdot \|) \longrightarrow (X, ||| \cdot |||_{m_0})$ to be the identity map. By \eqref{eq4}, we get that $1 \leq \|\Id\| \leq \left(1 + \frac{1}{m_0} \right)^{1/q}$. Let $T=\frac{1}{\|\Id\|}\Id$. Fix now $x\in S_X$ and observe that
\begin{equation*}
\|Tx\|_{m_0} = \frac{\|x\|_{m_0}}{\|\Id\|} \geq \left(\frac{m_0}{1 + m_0}\right)^{1/q} > 1 - \tilde\eta_X \left(\frac{1}{8}\right).	
\end{equation*}
Since the pair $((X, \| \cdot \|), (X,\| \cdot \|_{m_0}))$ has the pointwise BPB property with  $\tilde\eta_X$, there exists $S: (X, \| \cdot \|) \longrightarrow (X, \|\cdot\|_{m_0})$ with $\|S\| = 1$ such that
\begin{equation*}
\|S(x)\|_{m_0} = 1 \qquad \text{and} \qquad \|S - T\| < \frac{1}{8}.	
\end{equation*}
So, we get that
\begin{equation*}
\|S - \Id\| \leq \|S - T\| + \|T - \Id\| = \|S - T\| + \|\Id\| - 1 \leq \frac{1}{8} +  \left(\frac{m_0+1}{m_0}\right)^{1/q} - 1  < \frac{1}{4}.
\end{equation*}
By Lemma \ref{lemma:norm_of_the_inverse}, we get that $S$ is invertible with $\|S^{-1}\|<4/3$. Now, take $y^* \in S_{(X, \|\cdot\|_{m_0})^*}$ to be such that $y^*(S(x)) = S^*y^*(x) = 1$. By the smoothness of $X$, we have that $S^*y^* = J_X(x)$. Finally, if $z \in S_X$ is such that
\begin{equation*}
\re \langle J_X(x), z \rangle > 1 - \delta_{X_{m_0}} \left( \frac{3}{8} \eps \right),	
\end{equation*}
then
\begin{equation*}
\re \langle y^*, Sz \rangle = \re \langle S^* y^*, z \rangle = \re \langle J_X(x), z \rangle > 1 - \delta_{X_{m_0}} \left( \frac{3}{8} \eps \right).	
\end{equation*}
Since $\langle y^*, Sx\rangle = 1$, we have that
\begin{equation*}
\left\| \frac{Sx + Sz}{2} \right\|_{m_0} \geq \re \left \langle y^*, \frac{Sx + Sz}{2} \right \rangle > 1 - \delta_{X_{m_0}} \left( \frac{3}{8} \eps \right).
\end{equation*}
This implies that $\ds \|Sx - Sz\|_{m_0} < \frac{3}{8} \eps$. Now, since $\ds \|S^{-1}\| < \frac{4}{3}$, we get that
\begin{equation*}\|z - x\| = \|S^{-1}(Sz) - S^{-1}(Sx)\| \leq \|S^{-1}\| \|Sx - Sz\|_{m_0} < \frac{\eps}{2}.
\end{equation*}
This proofs the claim.
\end{proof}

Let us observe that, in the above proof, if one starts with a Banach space $X$ which is isomorphic to a Hilbert space, then there is no need to use Pisier's result, as inequality \eqref{equation:BPB-domain-Pisier} with $q=2$ and $\delta=1$ follows immediately from the parallelogram law, so we get that $\delta_X(\eps)\geq C\eps^2$ for some $C>0$ in this case. Moreover, if one follows all the arguments giving in the proof of Theorem \ref{thm1}, then one gets that the constant $C$  depends only on $\widetilde{\eta}_X$ and the Banach-Mazur distance from $X$ to the Hilbert space.

\begin{corollary}\label{cor:Hilbert-space-universal-domain}
Let $X$ be a universal pointwise BPB domain space. If $X$ is isomorphic to a Hilbert space, then $X$ has an optimal modulus of convexity. That is,  there exists $C>0$ such that
$$
\delta_X(\eps)\geq C\,\eps^2 \qquad (0<\eps<2).
$$
Moreover, the constant $C$  depends only on $\widetilde{\eta}_X$ and the Banach-Mazur distance from $X$ to the Hilbert space.
\end{corollary}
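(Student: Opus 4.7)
My plan is to replay the proof of Theorem \ref{thm1} verbatim, substituting a Hilbertian norm for Pisier's renorming. Since $X$ is isomorphic to a Hilbert space $H$, I would fix an isomorphism $T\colon X\longrightarrow H$ and set $|||x|||:=\|Tx\|_H/\|T\|$ for $x\in X$. This Euclidean norm satisfies $|||x|||\leq \|x\|\leq c\,|||x|||$ with $c$ as close to $d_{BM}(X,H)$ as we wish, and---most importantly---it satisfies the parallelogram identity
\[
\left|\left|\left|\frac{x+y}{2}\right|\right|\right|^2+\left|\left|\left|\frac{x-y}{2}\right|\right|\right|^2=\frac{|||x|||^2+|||y|||^2}{2},
\]
which is precisely the inequality \eqref{equation:BPB-domain-Pisier} with $q=2$ and $\delta=1$. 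In particular, Pisier's deep theorem is not needed, and the exponent $q$ as well as the constant $\delta$ become fully explicit.

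With this $|||\cdot|||$ in hand, I would then follow the proof of Theorem \ref{thm1} line by line. Define $X_m=(X,\|\cdot\|_m)$ via $\|x\|_m^2=\|x\|^2+\frac{1}{m}|||x|||^2$, and choose $m_0\in\N$ satisfying
\[
\left(\frac{m_0}{1+m_0}\right)^{1/2}>1-\tilde\eta_X(1/8)\quad\text{and}\quad \frac{1}{8}+\left(\frac{m_0+1}{m_0}\right)^{1/2}-1<\frac{1}{4},
\]
observing that $m_0$ is determined solely by $\tilde\eta_X(1/8)$. Applying the pointwise BPB property of the pair $(X,X_{m_0})$ together with Lemmas \ref{lemma:norm_of_the_inverse} and \ref{lemma-estimation-modulus-of-convexity} yields, exactly as in Theorem \ref{thm1}, the bound $\delta_X(\eps)\geq \tfrac{1}{2}\,\delta_{X_{m_0}}(3\eps/8)$. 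Finally, Lemma \ref{lem1} applied with $q=2$, $\delta=1$ and the constant $c$ above gives
\[
\delta_{X_{m_0}}\!\left(\frac{3\eps}{8}\right)\geq \frac{(3/8)^2}{8(m_0 c^2+1)}\,\eps^2,
\]
so $\delta_X(\eps)\geq C\eps^2$ with $C=C(m_0,c)$.

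There is no essential obstacle: the argument is fundamentally a bookkeeping exercise inside an already completed proof. The one point I would verify carefully is that the resulting constant $C$ depends only on $\tilde\eta_X$ and $d_{BM}(X,H)$, which is indeed the case since $m_0$ is fixed by $\tilde\eta_X(1/8)$ alone and $c$ can be chosen as close to $d_{BM}(X,H)$ as desired. Optimality of the exponent $2$---the very content of the corollary---is a direct consequence of the parallelogram law, bypassing the $q\geq 2$ limitation intrinsic to Pisier's theorem.
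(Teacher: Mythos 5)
Your proposal is correct and is essentially the paper's own argument: the authors likewise observe that for $X$ isomorphic to a Hilbert space one can bypass Pisier's theorem, since the transported Hilbertian norm satisfies inequality \eqref{equation:BPB-domain-Pisier} with $q=2$ and $\delta=1$ by the parallelogram law, and then rerun the proof of Theorem~\ref{thm1} to get $\delta_X(\eps)\geq C\eps^2$ with $C$ depending only on $\widetilde{\eta}_X$ and the Banach--Mazur distance to the Hilbert space. Your bookkeeping of $m_0$ (fixed by $\tilde\eta_X(1/8)$) and of $c$ (close to $d_{BM}(X,H)$) matches what the paper intends.
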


The main consequence of the above particular case is the following somehow surprising result.

\begin{corollary}\label{cor1}
Let $2<p<\infty$ and let $\mu$ be a positive measure. If the dimension of  $L_p(\mu)$ is greater than or equal to $2$, then $L_p(\mu)$ is not a universal pointwise BPB domain space.
\end{corollary}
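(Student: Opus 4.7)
The plan is to reduce the problem to the two-dimensional case $\ell_p^2$ and then play off the correct order of the modulus of convexity of $\ell_p^2$ against the order-$\eps^2$ lower bound forced by Corollary~\ref{cor:Hilbert-space-universal-domain}.

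First I would check that $\ell_p^2$ sits as an isometric, one-complemented subspace of $L_p(\mu)$ whenever $\dim L_p(\mu)\geq 2$. Two linearly independent functions in $L_p(\mu)$ let one find disjoint measurable sets $A,B$ with $0<\mu(A),\mu(B)<\infty$, and the averaging projection
\begin{equation*}
P(f) = \left(\frac{1}{\mu(A)}\int_A f\,d\mu\right)\chi_A + \left(\frac{1}{\mu(B)}\int_B f\,d\mu\right)\chi_B
\end{equation*}
onto $\spann(\chi_A,\chi_B)\cong \ell_p^2$ has norm one by Jensen's inequality. Since $L_p(\mu)$ is uniformly smooth for $p>2$, Corollary~\ref{corollary-universalpointwise BPB property-one-complemented} applies: if $L_p(\mu)$ were a universal pointwise BPB domain space, so would be $\ell_p^2$.

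Next, being two-dimensional, $\ell_p^2$ is isomorphic to $\ell_2^2$, so Corollary~\ref{cor:Hilbert-space-universal-domain} would give a constant $C>0$ with $\delta_{\ell_p^2}(\eps)\geq C\eps^2$ for every $\eps\in(0,2)$. To refute this I would use the explicit test pair $x=(a,b)$, $y=(a,-b)$ in $S_{\ell_p^2}$, with $a,b\geq 0$, $a^p+b^p=1$ and $\eps=2b$. A direct computation gives $\|x\|_p=\|y\|_p=1$, $\|x-y\|_p=\eps$, and
\begin{equation*}
1-\left\|\tfrac{x+y}{2}\right\|_p \;=\; 1 - \bigl(1-(\eps/2)^p\bigr)^{1/p},
\end{equation*}
so $\delta_{\ell_p^2}(\eps)\leq 1 - \bigl(1-(\eps/2)^p\bigr)^{1/p}$. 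Expanding as $\eps\to 0^+$ yields an upper bound of order $\eps^p/(p\cdot 2^p)$, which is $o(\eps^2)$ since $p>2$. This contradicts the lower bound $\delta_{\ell_p^2}(\eps)\geq C\eps^2$ and rules out the universal pointwise BPB domain property for $\ell_p^2$, hence also for $L_p(\mu)$.

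The only mildly delicate step is verifying the one-complementation of $\ell_p^2$ in a general $L_p(\mu)$, which is handled by the averaging projection above; everything else is a transparent consequence of Corollaries~\ref{corollary-universalpointwise BPB property-one-complemented} and~\ref{cor:Hilbert-space-universal-domain} together with the elementary convexity calculation. No additional structural input is required.
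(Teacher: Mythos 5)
Your proposal is correct and follows essentially the same route as the paper: reduce to $\ell_p^2$ via a norm-one projection and Corollary~\ref{corollary-universalpointwise BPB property-one-complemented}, invoke Corollary~\ref{cor:Hilbert-space-universal-domain} to force $\delta_{\ell_p^2}(\eps)\geq C\eps^2$, and contradict this with $\delta_{\ell_p^2}(\eps)\sim\eps^p$. You merely make explicit two steps the paper leaves implicit (the averaging projection onto $\spann(\chi_A,\chi_B)$ and the test pair showing $\delta_{\ell_p^2}(\eps)=O(\eps^p)$), and both verifications are correct.
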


\begin{proof}
Suppose that $L_p(\mu)$ is a universal pointwise BPB domain space with dimension bigger than $1$. As there is an one-complemented subspace of $L_p(\mu)$ which is isometric to $\ell_p^2$, Corollary~\ref{corollary-universalpointwise BPB property-one-complemented} shows that $\ell_p^2$ is a universal pointwise BPB domain space. As $\ell_p^2$ is isomorphic to a Hilbert space, Corollary \ref{cor:Hilbert-space-universal-domain} provides that
\[
\delta_{\ell_p^2}(\eps)\geq C\eps^2.
\]
However, $\delta_{\ell_p^2}(\eps) \sim  \eps^p$ and this leads to $p\leq 2$.
\end{proof}

\section{Universal pointwise BPB range spaces}\label{sec:range-spaces}

We start with a sufficient condition for a Banach space $Y$ to be a universal pointwise BPB range space which generalizes the previously known results from \cite{DKL}. We need the following definition which was introduced in \cite{CGKS} to study the BPBp (for Asplund operators). We write $\aconv(A)$ to denote the absolutely convex hull of the set $A$. A subset $\Gamma$ of the unit ball of the dual of a Banach space $X$ is said to be \emph{$1$-norming} (for $X$) if the weak-star closure of $\aconv(\Gamma)$ is the whole of $B_{X^*}$.

\begin{definition}\cite[Definition 3.1]{CGKS} \label{def-ack-str} We say that a Banach space $Y$ has \emph{ACK-structure with parameter $\rho$}, for some $\rho \in [0, 1)$ (ACK$_\rho$-structure, for short), whenever there is a $1$-norming set $\Gamma \subset B_{Y^*}$ such that for every $\eps' > 0$ and every non-empty relatively $w^*$-open subset $U \subset \Gamma$, there exist a non-empty subset $V \subset U$, vectors $y_1^* \in V$, $e \in S_Y$, and an operator $F \in \mathcal{L}(Y,Y)$ with the following properties:
	\begin{itemize}
		\item[(i)] $\|F(e)\| = \|F\| = 1$;
		\item[(ii)] $y_1^*(F(e)) = 1$;
		\item[(iii)] $F^*(y_1^*) = y_1^*$;
		\item[(iv)] $|v^*(F(e))| \leq \rho$ for every $y^* \in \Gamma \setminus V_1$, where $$V_1 = \bigl\{y^* \in \Gamma\colon \|F^*(y^*)\| + (1 -  \eps') \|[\Id_{Y^*} - F^*](y^*)\| \leq 1 \bigr\};$$
		\item[(v)] $\dist \bigl(F^*(y^*),\, \aconv\{0, V\}\bigr) <  \eps'$ for every $y^* \in \Gamma$;
		\item[(vi)] $|v^*(e) - 1| \leq  \eps'$ for every $v^* \in V$.	
	\end{itemize}
\end{definition}

The promised result about the relation between ACK$_\rho$-structure and the pointwise BPB property is the following one, which is analogous to \cite[Theorem 3.4]{CGKS} for the BPBp. Actually, our proof is an adaptation of the proof of that result and that is why we just explain the idea of the proof and the necessary modifications comparing to \cite[Theorem 3.4]{CGKS}.

\begin{proposition}\label{pointwise BPB property:ACK} Every Banach space with ACK$_{\rho}$-structure is a universal pointwise BPB range space.
\end{proposition}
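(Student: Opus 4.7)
The plan is to adapt the proof of \cite[Theorem~3.4]{CGKS} to the pointwise setting. The key new feature is that the norming point $x_0$ is not allowed to move, so the approximating operator $S$ must have norm attainment at $x_0$ built in automatically rather than being allowed to shift to a nearby point as in the BPBp.

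Fix $\eps\in(0,1)$ and choose auxiliary parameters $\eps',\tilde\eta>0$ (depending on $\eps$ and $\rho$) to be specified at the end. Given $T\in\mathcal{L}(X,Y)$ with $\|T\|=1$ and $x_0\in S_X$ satisfying $\|Tx_0\|>1-\tilde\eta$, the uniform smoothness of $X$ yields a well-defined norming functional $x_0^*:=J_X(x_0)\in S_{X^*}$, and the $1$-norming property of $\Gamma$ provides some $y^*\in\Gamma$ with $\re y^*(Tx_0)>1-2\tilde\eta$. Applying ACK$_\rho$-structure to the non-empty relatively $w^*$-open set
$$U=\bigl\{z^*\in\Gamma\colon \re z^*(Tx_0)>1-2\tilde\eta\bigr\}$$
with parameter $\eps'$ produces $V\subset U$, $y_1^*\in V$, $e\in S_Y$ and $F\in\mathcal{L}(Y,Y)$ satisfying (i)--(vi); in particular, $\re v^*(Tx_0)$ is close to $1$ and $|v^*(e)-1|\leq\eps'$ for every $v^*\in V$. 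The crucial construction, specific to the pointwise setting, is
$$Sx:=F(Tx)+x_0^*(x)\bigl(e-F(Tx_0)\bigr)\qquad(x\in X),$$
so that $Sx_0=e$ and hence $\|Sx_0\|=1$ automatically, with no movement of $x_0$.

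The hard part is the two-sided estimate $\|S\|\leq 1$ and $\|S-T\|<\eps$. Following the pattern of \cite[Theorem~3.4]{CGKS}, for each $x\in B_X$ one evaluates $|y^*(Sx)|$ over $y^*\in\Gamma$ and splits according to whether $y^*\in V_1$ or not. On $\Gamma\setminus V_1$, condition (iv) forces $|y^*(F(e))|\leq\rho$ and (v) makes $F^*(y^*)$ to be $\eps'$-close to $\aconv\{0,V\}$, which localizes the action of $F\circ T$ and keeps the perturbation under control, with the factor $1-\rho>0$ entering the denominator of the resulting estimate exactly as in \cite{CGKS}. On $V_1$, the combination of (iii), (v), (vi) with the fact that every $v^*\in V$ has $v^*(Tx_0)\approx 1$ and $v^*(e)\approx 1$ forces the two summands of $Sx$ to almost cancel their cross-contributions. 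The uniform smoothness of $X$ enters, via \cite[Proposition~2.1]{DKL}, through the observation that $T^*(y_1^*)$ is nearly proportional to $x_0^*$ when $y_1^*(Tx_0)$ is close to $1$, which is what controls the interaction between $F(Tx)$ and $x_0^*(x)e$. Choosing $\eps'$ and $\tilde\eta$ small enough as functions of $\eps$ and $\rho$ then yields $\|S\|\leq 1$ and $\|S-T\|<\eps$; together with $\|Sx_0\|=1$, this in fact forces $\|S\|=1$, establishing the pointwise BPB property for $(X,Y)$.
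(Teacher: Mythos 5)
Your high-level plan (apply the ACK$_\rho$ data to a $w^*$-slice of $\Gamma$ determined by $Tx_0$ and assemble $S$ from $F$, $e$ and $J_X(x_0)$) matches the paper's, but the operator you write down does not work, and this formula is the entire content of the proof. Take $Y=c_0$ with its canonical ACK-structure coming from property $\beta$: $\Gamma=\{\pm e_n^*\}$, and for a $w^*$-open set meeting $e_1^*$ the natural data are $V=\{e_1^*\}$, $y_1^*=e_1^*$, $e=e_1$ and $F=e_1^*\otimes e_1$. Let $X=\ell_2^2$, $x_0=(1,0)$ and $T(a,b)=ae_1+be_2$, so $\|T\|=\|Tx_0\|=1$. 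Your formula gives
$$Sx=F(Tx)+\langle J_X(x_0),x\rangle\bigl(e-F(Tx_0)\bigr)=ae_1+a(e_1-e_1)=ae_1,$$
hence $\|S-T\|=1$: the term $F\circ T$ discards everything $T$ does outside the (typically one-dimensional) range of $F$, and the rank-one correction cannot restore it. There is also no mechanism in (i)--(vi) producing the ``cancellation of cross-contributions'' you invoke for the norm estimate: those conditions control $y^*(F(e))$ and $v^*(e)$ for $v^*\in V$, but say nothing relating $y^*(e)$ to $(F^*y^*)(Tx_0)$ for a general $y^*\in\Gamma$.

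The component of $T$ that must be kept is $(I_Y-F)\circ T$, damped by a factor $1-\tilde\eps$ so that the inequality defining $V_1$ in (iv) yields $\|S\|\leq 1$; the operator used in the paper (following \cite[Theorem 3.4]{CGKS}) is
$$S(x)=\langle J_X(x_0),x\rangle F(e)+(1-\tilde\eps)(I_Y-F)(Tx),$$
which in the example above returns $S(a,b)=ae_1+(1-\tilde\eps)be_2$, within $\tilde\eps$ of $T$. Norm-attainment at $x_0$ is then not read off from $Sx_0=e$ but from $y_1^*(Sx_0)=y_1^*(F(e))+(1-\tilde\eps)\bigl(y_1^*(Tx_0)-(F^*y_1^*)(Tx_0)\bigr)=1$, using (ii) and (iii). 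Your choice of $U$ as the slice $\{z^*\in\Gamma\colon \re z^*(Tx_0)>1-2\tilde\eta\}$ is acceptable, since the pointwise BPB property of the pair $(X,\K)$ --- this is where uniform smoothness enters, as in the paper's use of the argument of \cite[Lemma 2.3]{cas-alt23} --- upgrades ``$T^*z^*$ nearly attains its norm at $x_0$'' to ``$T^*z^*$ is norm-close to $J_X(x_0)$'', which is what the estimate of $\|S-T\|$ on the retained part requires.
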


\begin{proof} Let $X$ be a uniformly smooth Banach space, and $Y$ be a Banach space enjoying the ACK$_{\rho}$-structure with respect to the corresponding $1$-norming set $\Gamma \subset B_{Y^*}$. Since $X$ is uniformly smooth, it is reflexive and every operator from $X$ into $Y$ is Asplund.

Arguing in the same way as in \cite[Lemma 2.3]{cas-alt23} with the only difference that instead of the Bishop-Phelps-Bollob\'as theorem one has to apply the fact that the pair $(X, \K)$ possesses the pointwise BPB property (recall that $X$ is uniformly smooth!), we can demonstrate the following: there is a function $\eta: (0, 1) \longrightarrow (0, 1)$ such that for every $T \in \mathcal{L}(X,Y)$ with $\norm{T}=1$,  every $\eps \in (0, 1)$, every $x_0\in S_X$ such that $\norm{Tx_0} > 1-\eta(\eps)$ and every $r>0$ there exists
 a $w^*$-open set $U_r \subset Y^*$ with $U_r \cap \Gamma \neq\emptyset$,
   such that
\begin{equation} \label{eqT^*v}
\norm{T^*z^* - J_X(x_0)}\leq  r +  \eps
\end{equation}
for all $z^*\in U_r \cap \Gamma$.

Since $U_r \cap \Gamma \neq\emptyset$, we can apply Definition \ref{def-ack-str} to $U =  U_r  \cap \Gamma$ and $\eps' > 0$ and obtain a {non-empty} $V  \subset U$, $y_1^* \in V$, $e \in S_Y$, $F \in \mathcal{L}(Y,Y)$ and $V_1 \subset \Gamma$  which satisfy properties (i) -- (vi). In particular, for every $z^* \in V \subset U_r \cap \Gamma$ the inequality \eqref{eqT^*v} holds true. Arguing in the same way as in the proof of \cite[Lemma~3.5]{CGKS}, one can specify the values of $r$ and $\eps'$ and select $\widetilde{\eps} \in [\eps', 1)$  in such a way that the formula
\begin{equation*}\label{eq:operadorDef-new}
S(x) = \langle J_X(x_0), x \rangle Fe+(1-\widetilde{\eps})[I_Y-F](Tx)
\end{equation*}
defines a norm-one operator $S\colon X\longrightarrow Y$ that satisfies $\|S(x_0)\| = 1$ and $\|T - S\| < \eps\left( 2 + \frac{2}{1 - \rho + \eps}\right)$. Taking into account that $\eps$ is arbitrarily small, this gives us what we want.
\end{proof}

Let us comment, for further use, that when one starts in the above proof with $T\in \mathcal{K}(X,Y)$, then the operator $S$ also belongs to $\mathcal{K}(X,Y)$.

One important remark here is that given a Banach space $Y$ with ACK$_\rho$-structure, the function $\tilde\eta(X,Y)$ depends on the Banach space $X$ (see Corollary~\ref{corollary:no-universalfunctionforrange} and the paragraph before it). In the above proof this dependence appears when we introduce the function $\eta$. The dependence on the value of $\rho$ is the only form of dependence of $\tilde\eta(X,Y)$ on $Y$ in the above proof.

As the main consequence of Proposition \ref{pointwise BPB property:ACK}, we get the following list of Banach spaces which are universal pointwise BPB ranges spaces by taking into account the examples and stability results for the ACK$_\rho$-structure given in \cite[\S 4]{CGKS}.

\begin{corollary}\label{corollary-list-ACK-rho}
The following Banach spaces have ACK$_\rho$-structure and, therefore, are universal pointwise BPB range spaces:
\begin{enumerate}
\item[(a)] $C(K)$ and $C_0(L)$ spaces and, more generally, uniform algebras;
\item[(b)] Banach spaces with property $\beta$;
\item[(c)] finite $\ell_\infty$-sums of Banach spaces with ACK$_\rho$-structure;
\item[(d)] finite injective tensor products of spaces with ACK$_\rho$-structure;
\item[(e)] $c_0(Y)$, $\ell_\infty(Y)$ and $c_0(Y,w)$ (the space of weakly-null sequences of $Y$) when $Y$ has ACK$_\rho$-structure;
\item[(f)] $C(K,Y)$ and $C_w(K,Y)$ when $Y$ has ACK$_\rho$-structure.
\end{enumerate}
\end{corollary}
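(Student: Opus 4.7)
The plan is to use Proposition \ref{pointwise BPB property:ACK} as a black box and then verify, family by family, that each listed space carries an ACK$_\rho$-structure. Because Proposition \ref{pointwise BPB property:ACK} imposes no additional hypothesis on the range space beyond the existence of such a structure, the entire corollary reduces to producing, in each case, the data $(\Gamma, V, y_1^*, e, F)$ demanded by Definition \ref{def-ack-str}. These verifications were essentially carried out in \cite[\S 4]{CGKS} for the purposes of the BPBp (Asplund case); I would simply transcribe them and invoke Proposition \ref{pointwise BPB property:ACK} at the end.

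For item (a), I would take $\Gamma$ to be the set of evaluation functionals at points of the Choquet boundary of the uniform algebra $A \subset C(K)$; the required operator $F$ is multiplication by a suitable peak function $f$ at the chosen boundary point, $e = f$, and $y_1^*$ is the corresponding point evaluation. The conditions (i)--(vi) then hold with $\rho = 0$, as worked out in \cite[\S 4]{CGKS}; specialising to $A = C(K)$ or $A = C_0(L)$ gives the classical cases. For (b), property $\beta$ with constant $\rho$ supplies a $1$-norming family $\{y_\alpha^*\}_{\alpha}$ of extreme points of $B_{Y^*}$ together with norming vectors $\{y_\alpha\}_{\alpha}\subset S_Y$ satisfying $|y_\beta^*(y_\alpha)|\leq \rho$ for $\alpha\neq\beta$; setting $F = y_\alpha^* \otimes y_\alpha$, $e = y_\alpha$, $y_1^* = y_\alpha^*$ yields ACK$_\rho$-structure with the same $\rho$.

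Items (c)--(f) are stability statements already contained in \cite[\S 4]{CGKS}: ACK$_\rho$-structure is preserved under finite $\ell_\infty$-sums, finite injective tensor products, the vector-valued spaces $c_0(Y)$, $\ell_\infty(Y)$, $c_0(Y,w)$, and $C(K,Y)$, $C_w(K,Y)$ whenever $Y$ has ACK$_\rho$-structure. In each case the data $(\Gamma, V, y_1^*, e, F)$ in the sum or tensor space is built in a canonical way from the data in the factor space (for instance, in $C(K,Y)$ one combines point evaluations in $K$ with the $\Gamma$ of $Y$, and takes $F$ to be a tensor of a scalar peaking multiplication with the $F$ inherited from $Y$). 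Once all of these ACK$_\rho$-structures are in hand, Proposition \ref{pointwise BPB property:ACK} immediately yields that each of the spaces in (a)--(f) is a universal pointwise BPB range space.

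I do not anticipate any real obstacle: the verifications of ACK$_\rho$-structure are already written out in \cite{CGKS} and the only new ingredient we contribute is Proposition \ref{pointwise BPB property:ACK}, which has just been established. The one point to be slightly careful about is that in Proposition \ref{pointwise BPB property:ACK} the modulus $\tilde\eta(X,Y)(\varepsilon)$ depends on the domain $X$ through the function $\eta$ produced from the pair $(X,\K)$ having the pointwise BPB property; but this is legitimate, since the definition of universal pointwise BPB range space allows $\tilde\eta$ to depend on $X$. Hence the corollary follows.
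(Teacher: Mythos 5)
Your proposal is correct and follows exactly the paper's own route: the corollary is obtained by citing the examples and stability results of \cite[\S 4]{CGKS} to see that each listed space has ACK$_\rho$-structure, and then applying Proposition \ref{pointwise BPB property:ACK}. Your additional sketches of the structures (Choquet boundary evaluations for uniform algebras, the property-$\beta$ family, the canonical constructions for sums, tensor products and vector-valued spaces) and the remark about the dependence of $\tilde\eta(X,Y)$ on $X$ are consistent with what the paper says.
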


Our next aim is to present a huge number of examples of Banach spaces which are not universal pointwise BPB range spaces. The first result is the more general one.

\begin{theorem}\label{theorem-conv-smooth-not-universal-range}
Every simultaneously uniformly convex and uniformly smooth Banach space $Y$ of dimension greater than  $1$ is not a universal pointwise BPB range space.
\end{theorem}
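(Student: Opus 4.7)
\emph{Proof plan.} The plan is to proceed by contradiction: assume that $Y$ is a universal pointwise BPB range space and exhibit a uniformly smooth $X$ for which $(X,Y)$ fails the pointwise BPB property. Note that Corollary~\ref{corollary:no-universalfunctionforrange} alone is not enough: it only provides uniformly smooth spaces $X_n$ with $\tilde\eta(X_n,Y)(\eps_0)\to 0$, while we must produce a single $X$ with $\tilde\eta(X,Y)(\eps_0)=0$.

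\emph{Setup.} Since $\dim Y\geq 2$ and $Y$ is smooth, I would pick $y_0\in S_Y$ with $y_0^*=J_Y(y_0)\in S_{Y^*}$ and choose $y_1\in S_Y\cap\ker y_0^*$. The vectors $y_0,y_1$ play the role of a ``basic'' and a ``tangent'' direction at $y_0$. Uniform smoothness of $Y$ gives a good control on $\|y_0+ty_1\|$ for small $t$ via $\rho_Y(t)$, while uniform convexity of $Y$ forces a quantitative lower bound on $1-\|\tfrac{u+v}{2}\|$ for $u,v\in S_Y$ that are separated in norm.

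\emph{Strategy.} Because $Y$ is uniformly convex and smooth, any norm-one operator $S$ attaining its norm at a given point $x_0\in S_X$ is geometrically rigid: $Sx_0$ lies on $S_Y$ and, by uniqueness of the supporting functional, $S^*J_Y(Sx_0)=J_X(x_0)$. I would exploit this rigidity together with the two-dimensional structure in $Y$ given by $y_0,y_1$ to build operators $T$ that nearly attain their norm at $x_0$ but are ``twisted'' slightly into the $y_1$-direction at $x_0$, in such a way that any norm-one operator attaining at $x_0$ must be bounded away from $T$ by a fixed amount.

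\emph{Construction.} For each $n\in\N$, I would produce a uniformly smooth space $X_n$ (with uniformly controlled modulus of smoothness) together with a rank-$2$ operator $T_n\in S_{\mathcal L(X_n,Y)}$, whose image is contained in $\mathrm{span}(y_0,y_1)$, and a point $x_n\in S_{X_n}$ such that $\|T_n x_n\|\to 1$ while
\[
\mathrm{dist}\bigl(T_n,\{S\in S_{\mathcal L(X_n,Y)}\colon \|Sx_n\|=1\}\bigr)\geq\varepsilon_0>0.
\]
Then, combining into a single uniformly smooth $X$ via an $\ell_2$-sum $X=[\bigoplus_n X_n]_{\ell_2}$ (which preserves uniform smoothness precisely because the moduli of smoothness of the $X_n$ are uniformly controlled) and extending $T_n$ to $X$ via the coordinate projection, one obtains a sequence of operators in $S_{\mathcal L(X,Y)}$ violating the pointwise BPB modulus for $(X,Y)$, contradicting the hypothesis.

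\emph{Main obstacle.} The principal difficulty is choosing the quantitative calibration between the twist magnitude and the moduli $\delta_Y,\rho_Y$ so that the deficit $1-\|T_n x_n\|$ vanishes while the lower bound $\varepsilon_0$ on the distance to the norm-attaining set remains positive and independent of $n$. The key step is verifying this lower bound: any $S$ close to $T_n$ with $\|S\|=\|Sx_n\|=1$ would have $S^*J_Y(Sx_n)=J_X(x_n)$ and $Sx_n$ close to $T_n x_n/\|T_n x_n\|$; comparing the direction of $Sx_n$ with the ``twisted'' direction of $T_n x_n$ via uniform convexity of $Y$ (equivalently, uniform smoothness of $Y^*$) should produce the required contradiction. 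A secondary difficulty is ensuring that the $\ell_2$-assembly $X$ stays uniformly smooth, which constrains the choice of the $X_n$.
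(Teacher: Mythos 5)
Your overall architecture is the same as the paper's: argue by contradiction, build a sequence of uniformly smooth spaces $X_n$ on which the pointwise BPB property with respect to $Y$ degenerates, assemble them into $X=\bigl[\bigoplus_n X_n\bigr]_{\ell_2}$ (uniform smoothness of $X$ being guaranteed by a \emph{uniform} lower bound on the moduli of convexity of the duals $X_n^*$), and use the fact that each $X_n$ is one-complemented in $X$ (Proposition~\ref{prop:abssumdomain:pointwise BPB property}) to get a single function $\tilde\eta$ working for all pairs $(X_n,Y)$. But the entire mathematical content of the theorem lies in the step you label the ``main obstacle'' and do not carry out: producing the $X_n$, the points $x_n$ and the operators $T_n$ with $\|T_nx_n\|\to1$ while the distance from $T_n$ to the operators attaining their norm at $x_n$ stays bounded below. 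You never say what the spaces $X_n$ are, and this cannot be postponed: if the $X_n$ were, say, Hilbert spaces, every pair $(X_n,Y)$ would have the pointwise BPB property with a common function, so the failure must be encoded in the geometry of the $X_n$ themselves. Moreover, the mechanism you sketch --- a rank-two operator ``twisted'' slightly into a tangent direction $y_1$ at $y_0\in S_Y$ --- points at the wrong obstruction: a small twist of that kind is exactly the sort of perturbation that a nearby norm-attaining operator can absorb (this is why uniformly convex domains, e.g.\ Hilbert spaces, \emph{do} satisfy the property), and nothing in your outline explains why the distance bound would be uniform in $n$.

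The obstruction the paper actually exploits is loss of \emph{strict convexity of the domain}. If $S\in S_{\mathcal L(X_n,Y)}$ attains its norm at $x_0$ and is within $1/4$ of a surjective near-isometry, then $S$ is invertible with $\|S^{-1}\|<4/3$ (Lemma~\ref{lemma:norm_of_the_inverse}), smoothness gives $S^*J_Y(Sx_0)=J_{X_n}(x_0)$, and the uniform convexity of $Y$ pulls back through $S$ to yield $\delta_{X_n}(\eps)\geq\frac12\delta_Y\bigl(\frac38\eps\bigr)$ via Lemma~\ref{lemma-estimation-modulus-of-convexity}. So the paper takes $X_n$ to be $Y$ itself renormed with unit ball $B_Y+\frac1n U$, where $U$ is the ball of an equivalent smooth but non-strictly-convex norm: these $X_n$ are \emph{not} strictly convex, the identity $X_n\to Y$ is a near-isometry (so $T_n=\Id_n/\|\Id_n\|$ nearly attains its norm at \emph{every} point), and the dual norms $\|f\|_{Y^*}+\frac1n\|f\|_{(Y,p)^*}$ are uniformly convex with the common modulus $\delta_{Y^*}(t/4)$ by Lemma~\ref{lemma-sum-of-norms}, which is what makes the $\ell_2$-assembly uniformly smooth. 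The contradiction is then that the pointwise BPB property would force each $X_n$ to be uniformly convex. To complete your proof you would need to replace the ``twist'' idea by a construction of this kind (a quantitative flatness in the domain that no norm-one operator attaining its norm at $x_n$ can reproduce, precisely because $Y$ is uniformly convex), together with the two quantitative lemmas above; as written, the proposal identifies the frame of the argument but leaves its core unproved.
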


We need some technical lemmas prior to give the proof of the theorem.

\begin{lemma}[{\cite[Proposition 3 and Corollary 5]{Fig76}}] \label{lemma-modulus-properties}
Let $X$ be a Banach space. Then its modulus of convexity $\delta_X$ has the following property:  both $\delta_X(t)$ and  $\delta_X(t)/t$ are non-decreasing functions.
\end{lemma}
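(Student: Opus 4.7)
My plan splits the lemma into its two claims. The monotonicity of $\delta_X$ itself is immediate from the definition: if $0 < t_1 \leq t_2 \leq 2$, then every pair $(x,y)\in B_X\times B_X$ with $\|x-y\|\geq t_2$ a fortiori satisfies $\|x-y\|\geq t_1$, so the infimum defining $\delta_X(t_2)$ is taken over a subfamily of the configurations used for $\delta_X(t_1)$, giving $\delta_X(t_1)\leq \delta_X(t_2)$ with no further work.

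The monotonicity of $t\mapsto \delta_X(t)/t$ is the substantive claim, and I would establish the stronger inequality $\delta_X(s)\leq (s/t)\delta_X(t)$ for all $0<s\leq t\leq 2$ via the following averaging construction. Fix $\eps>0$ and pick a near-minimizer for $\delta_X(t)$: choose $x,y\in B_X$ with $\|x-y\|\geq t$ and $1-\|(x+y)/2\|<\delta_X(t)+\eps$. Select $\phi\in S_{X^*}$ norming $(x+y)/2$, and use the definition of the dual norm to pick $e\in S_X$ with $\phi(e)>1-\eps$. Setting $\alpha:=s/t$, define
\[
x' := \alpha x + (1-\alpha)e,\qquad y' := \alpha y + (1-\alpha)e.
\]
Then $x',y'\in B_X$ by the triangle inequality, $\|x'-y'\|=\alpha\|x-y\|\geq s$, and a direct computation gives
\[
\phi\!\left(\frac{x'+y'}{2}\right) = \alpha\|(x+y)/2\| + (1-\alpha)\phi(e) > 1 - \alpha\bigl(\delta_X(t)+\eps\bigr) - (1-\alpha)\eps.
\]
Since $\phi\in S_{X^*}$, the same quantity lower-bounds $\|(x'+y')/2\|$, so the pair $(x',y')$ witnesses $\delta_X(s)\leq \alpha\delta_X(t)+\eps$; letting $\eps\downarrow 0$ yields $\delta_X(s)\leq (s/t)\delta_X(t)$, i.e.\ $\delta_X(s)/s\leq \delta_X(t)/t$.

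The main obstacle I anticipate is purely bookkeeping: juggling the two independent $\eps$-approximations (the near-minimizer $(x,y)$ for $\delta_X(t)$ and the near-norming element $e$ for $\phi$) while verifying that the resulting pair $(x',y')$ satisfies the defining inequalities for $\delta_X(s)$. The construction itself uses nothing beyond the triangle inequality and the definition of the dual norm, matching the fact that the lemma is stated for arbitrary Banach spaces with no uniform convexity or reflexivity hypothesis; this makes me expect the argument to proceed cleanly without any delicate limit steps.
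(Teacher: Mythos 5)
Your argument is correct. Note that the paper supplies no proof of this lemma at all: it is quoted verbatim from Figiel \cite{Fig76}, so there is nothing to compare line by line; what you have done is reconstruct a self-contained proof of the cited fact. The first half (monotonicity of $\delta_X$) is indeed immediate from the inclusion of the defining sets, and your second half is the classical averaging argument: translating a near-extremal pair $(x,y)$ for $\delta_X(t)$ toward a direction $e$ that nearly norms the functional $\phi$ supporting the midpoint produces an admissible pair for $\delta_X(s)$ whose midpoint defect is contracted by the factor $\alpha=s/t$, yielding the scaling inequality $\delta_X(s)\leq (s/t)\,\delta_X(t)$, which is exactly the content of $\delta_X(t)/t$ being non-decreasing. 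Two small points are worth making explicit but do not affect the argument: over $\C$ one should work with $\re\phi$ throughout (choose $e$ with $\re\phi(e)>1-\eps$ and bound $\|(x'+y')/2\|\geq \re\phi((x'+y')/2)$), and in the degenerate case $(x+y)/2=0$ any $\phi\in S_{X^*}$ serves as the ``norming'' functional, since then $\phi((x+y)/2)=0=\|(x+y)/2\|$ and the estimate goes through unchanged. It also matters that the paper's definition of $\delta_X$ takes the infimum over $x,y\in B_X$ with $\|x-y\|\geq \eps$ rather than over the unit sphere with equality; your constructed pair $(x',y')$ lies only in $B_X$, so your proof is tailored to (and valid for) precisely the definition used here.
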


The next one is extracted from the proof of \cite[Theorem 1]{Gur65}.

\begin{lemma} \label{lemma-distances}
Let $X$ be a normed space, $x, y \in S_X$, $t > 0$. Then
$$
\|x - ty\| \geq \frac12 \|x - y\|.
$$
\end{lemma}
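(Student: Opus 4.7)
The plan is to use two applications of the triangle inequality, one in its direct and one in its reverse form, and then combine the resulting bounds additively. The key observation is that $\|ty - y\| = |t-1|\,\|y\| = |t-1|$, so the vector $ty$ differs from $y$ by a quantity depending only on $t$, and this quantity controls both how much $x - ty$ can shrink compared to $|t-1|$ from below, and how much it can shrink compared to $\|x - y\|$ from above.

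More concretely, I would first note that by the reverse triangle inequality applied to $x$ and $ty$ (using $\|x\| = 1$ and $\|ty\| = t$),
\[
\|x - ty\| \;\geq\; \bigl|\|x\| - \|ty\|\bigr| \;=\; |1-t|.
\]
Then, by the forward triangle inequality applied to the decomposition $x - y = (x - ty) + (ty - y)$,
\[
\|x - y\| \;\leq\; \|x - ty\| + \|ty - y\| \;=\; \|x - ty\| + |1-t|,
\]
which rearranges to $\|x - ty\| \geq \|x - y\| - |1-t|$.

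Summing these two lower bounds gives
\[
2\|x - ty\| \;\geq\; |1-t| + \bigl(\|x - y\| - |1-t|\bigr) \;=\; \|x - y\|,
\]
which is the required inequality. There is no real obstacle here; the proof is a two-line triangle-inequality argument, and the only subtlety is to notice that one should add the two estimates (rather than take a maximum or split into cases $t \leq 1$ versus $t \geq 1$, which also works but is less clean). I would write it in essentially the form above without any case distinction on the value of $t > 0$.
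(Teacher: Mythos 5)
Your proof is correct and is essentially identical to the paper's: both establish $|1-t|=\|ty-y\|\leq\|x-ty\|$ via the reverse triangle inequality and then apply the forward triangle inequality to $x-y=(x-ty)+(ty-y)$; adding the two bounds, as you do, is the same algebra as the paper's substitution yielding $\|x-y\|\leq 2\|x-ty\|$.
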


\begin{proof}
At first,
$$
\|y - ty\| = |1 - t|= \left| \|x\| - \|ty\| \right| \leq \|x - ty\|.
$$
Consequently,
\[
\|x - y\| \leq \|x - ty\| + \|y - ty\| \leq 2  \|x - ty\|.\qedhere
\]
\end{proof}

The last of this series of lemmas is maybe also known, but we were not able to find it in the literature.

\begin{lemma} \label{lemma-sum-of-norms}
Let $X$ be a linear space, $p_1, p_2$ be two norms on $X$ with
$$
p_2 \leq p_1
$$
and let $(X, p_1)$ be uniformly convex with modulus of convexity $\delta_1$. Denote $\|x\| = p_1(x) + p_2(x)$, then for every $t \in (0, 1)$ the modulus of convexity $\delta$ of $(X, \|\cdot\|)$ satisfies
$$
\delta(t) \geq \delta_1( t/ 4).
$$
\end{lemma}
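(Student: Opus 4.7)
The plan is to take $x,y\in X$ with $\|x\|=\|y\|=1$ and $\|x-y\|\geq t$ and show $\|(x+y)/2\|\leq 1-\delta_1(t/4)$. Set $a=p_1(x)$, $c=p_1(y)$ and, without loss of generality, assume $a\geq c$. The hypothesis $p_2\leq p_1$ applied to $x$ and $y$ forces $a,c\in[1/2,1]$, and applied to $x-y$ it gives $p_1(x-y)\geq \|x-y\|/2\geq t/2$.

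Normalize by introducing $u=x/a$ and $v=y/c$, unit vectors in $(X,p_1)$, and let $s=p_1(u-v)$. The identity
\[
\frac{x+y}{2} = c\cdot\frac{u+v}{2} + \frac{a-c}{2}\,u
\]
combined with the uniform convexity of $p_1$ (giving $p_1((u+v)/2)\leq 1-\delta_1(s)$) yields $p_1((x+y)/2)\leq m - c\,\delta_1(s)$, where $m=(a+c)/2$. Coupling this with the trivial estimate $p_2((x+y)/2)\leq (p_2(x)+p_2(y))/2 = 1-m$ gives
\[
\|(x+y)/2\| \leq 1 - c\,\delta_1(s).
\]

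It remains to show $c\,\delta_1(s)\geq \delta_1(t/4)$, which is the technical heart. I would produce two complementary lower bounds on $s$: the $p_1$-triangle inequality applied to $x-y=c(u-v)+(a-c)u$ gives $s\geq (p_1(x-y)-(a-c))/c$, while the hypothesis $p_2\leq p_1$ combined with the reverse triangle inequality in $p_2$ gives
\[
s = p_1(u-v) \geq p_2(u-v) \geq |p_2(u)-p_2(v)| = \Bigl|\tfrac{1-a}{a}-\tfrac{1-c}{c}\Bigr| = \frac{a-c}{ac}.
\]
Now split on $a-c$. If $a-c\leq t/4$, the first bound gives $s\geq (t/2-t/4)/c = t/(4c)$, so $cs\geq t/4$. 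If $a-c>t/4$, the second bound gives $cs\geq (a-c)/a = 1-c/a$; since $c<a-t/4\leq a(1-t/4)$ (using $a\leq 1$), we get $cs>t/4$. In both cases $cs\geq t/4$, hence $s\geq t/4$ as $c\leq 1$. Lemma~\ref{lemma-modulus-properties} (monotonicity of $r\mapsto \delta_1(r)/r$) then yields $\delta_1(s)\geq (4s/t)\,\delta_1(t/4)$, whence $c\,\delta_1(s)\geq (4cs/t)\,\delta_1(t/4)\geq \delta_1(t/4)$, completing the proof.

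The main obstacle is obtaining a useful lower bound on $s$ precisely when $a-c$ is close to $p_1(x-y)$: in that regime the $p_1$-triangle bound degenerates, and the assumption $p_2\leq p_1$ is exactly what allows the reverse-triangle inequality in $p_2$ (applied to the rescaled vectors $u,v$) to recover strength. Balancing these two bounds through the case split on $a-c\lessgtr t/4$ is what produces the clean constant $t/4$.
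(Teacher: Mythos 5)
Your proof is correct, and its skeleton coincides with the paper's: after observing $\tfrac12\|\cdot\|\leq p_1\leq\|\cdot\|$, both arguments rescale to the $p_1$-unit vectors $u=x/p_1(x)$, $v=y/p_1(y)$, use the same convex decomposition of $\tfrac{x+y}{2}$ (the paper writes it as $u+\tau v=(1-\tau)u+\tau(u+v)$ with $\tau=c/a$, which is your identity up to the factor $a/2$), apply uniform convexity of $p_1$ to $u,v$, absorb $p_2$ by the plain triangle inequality, and finish with Figiel's monotonicity of $\delta_1(r)/r$ from Lemma~\ref{lemma-modulus-properties}. The one genuine divergence is how the crucial lower bound $c\,p_1(u-v)\geq t/4$ (in the paper's notation, $p_1(\tau x-y)\geq t/4$) is obtained. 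The paper gets it in one line from Gurarij's inequality $\|\tau x-y\|\geq\tfrac12\|x-y\|$ (Lemma~\ref{lemma-distances}) together with $p_1\geq\tfrac12\|\cdot\|$. You instead run a case split on $a-c$ versus $t/4$, using the $p_1$-triangle inequality when $a-c$ is small and the reverse triangle inequality for $p_2$ applied to $u,v$ (which is where you invoke $p_2\leq p_1$ a second time) when $a-c$ is large. Your route is self-contained and dispenses with Lemma~\ref{lemma-distances} entirely, at the cost of a longer computation; the paper's route is shorter but needs that extra elementary lemma. Both land on the same constant $t/4$, and both (like the paper) implicitly use the standard fact that the modulus of convexity may be computed over unit vectors.
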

\begin{proof}
At first, remark that $\frac12 \|\cdot\| \leq p_1(\cdot) \leq  \|\cdot\|$. In order to estimate  $\delta(t)$, fix $x, y \in S_{(X, \|\cdot\|)}$ with $\|x - y\| = t < 1$.  Without loss of generality we may assume that $1/2 \leq p_1(y) \leq p_1(x) \leq 1$. Denote $u = \frac{x}{p_1(x)}$, $v = \frac{y}{p_1(y)}$, $\tau = \frac{p_1(y)}{p_1(x)}$. We have that
\begin{align*}
\frac{p_1(x+y) }{ p_1(x)} & =  p_1\left(\frac{x}{ p_1(x)}+\frac{y}{ p_1(x)}\right)  \\
& = p_1\left(u+ \tau v\right) =p_1\left((1 - \tau)u+ \tau (u + v)\right) \leq  (1 - \tau) + \tau p_1(u + v) \\
& \leq (1 - \tau) + 2 \tau (1 - \delta_1(p_1(u - v))) = 1 + \tau -2 \tau \delta_1(p_1(u - v)),
\end{align*}
that is
$$
p_1(x+y) \leq p_1(x) + p_1(y)  - 2 p_1(y)  \delta_1(p_1(u - v)).
$$
Lemma \ref{lemma-modulus-properties} implies that $p_1(y)  \delta_1(p_1(u - v)) \geq \delta_1(p_1(y) p_1(u - v))$, and $\|\tau x - y\| \geq  \|x -  y\| / 2$ by Lemma \ref{lemma-distances}, so we can continue as follows:
\begin{align*}
p_1(x+y) & \leq p_1(x) + p_1(y) - 2 \delta_1(p_1(y) p_1(u - v)) =  p_1(x) + p_1(y) - 2 \delta_1( p_1(\tau x -  y)) \\
&   \leq p_1(x) + p_1(y) - 2 \delta_1( \|\tau x - y\| / 2)  \leq p_1(x) + p_1(y) - 2 \delta_1( \|x -  y\| / 4) \\
& = p_1(x) + p_1(y) - 2 \delta_1( t/ 4).
\end{align*}
It remains to remark that
\begin{align*}
\|x + y\|  &= p_1(x+y) + p_2(x+y) \leq  p_1(x) + p_1(y) - 2 \delta_1( t/ 4)+ p_2(x) +  p_2(y) \\
& = \|x\| + \|y\| - 2 \delta_1( t/ 4) = 2 -  2 \delta_1( t/ 4).\qedhere
\end{align*}
\end{proof}

We are now able to give the proof of the enounced result.

\begin{proof}[Proof of Theorem \ref{theorem-conv-smooth-not-universal-range}]
Consider on $Y$ a greater equivalent norm $p$ such that $(Y, p)$ is uniformly smooth but not strictly convex. This can be done as follows: represent $Y$ as the direct sum $Y = Y_1\oplus Y_2$ with $Y_2$ being of dimension $2$, introduce a smooth but not strictly convex norm $q$ on $Y_2$, and define
$$
p(y_1 + y_2) = \alpha \sqrt{ \|y_1\|^2 + q(y_2)^2}
$$
for every $y_1 \in Y_1$, $y_2 \in Y_2$, with $\alpha > 0$ big enough.
Denote by $U$ the unit ball of $(Y, p)$.  For every $n\in \N$, let
$B_n = B_{Y} + \frac{1}{n}U$, and introduce the norm $\|\cdot\|_n$ on $Y$ whose unit ball equals $B_n$. We claim that the spaces $X_n =(Y, \|\cdot\|_n)$ have the following properties:
\begin{enumerate}[(i)]
\item  the identity operator $\Id_n : X_n\longrightarrow Y$ satisfies $\|x\|_n \leq \|\Id_n(x)\|$ for all $x \in X_n$, and $\lim\limits_{n\to \infty} \|\Id_n\|=1$;
\item $X_n$ is not strictly convex;
\item the moduli of convexity of all the spaces $X_n^*$ are bounded below by the same positive function $\delta_{Y^*}(t/4)$.
\end{enumerate}

Indeed, (i) is evident. Since $Y$ is reflexive, in order to demonstrate  (ii) it is sufficient to demonstrate that $X_n^*$ is not smooth. Recall that the norm on $X_n^*$ can be expressed as \begin{equation} \label{eq:dualtosumballs}
\|f\|_n^* = \|f\|_{Y^*} + \frac{1}{n}\|f\|_{(Y, p)^*}
\end{equation}
(see, for example, \cite[Lemma 2.1]{KLMW2018}).
The first term in this expression is smooth but the second is not, so the sum is not smooth. At last (iii), in view of formula \eqref{eq:dualtosumballs}, is a direct consequence of Lemma \ref{lemma-sum-of-norms}.

Finally, define
$$
X=\Bigl[\bigoplus_{n=1}^\infty X_n\Bigr]_{\ell_2}.
$$
The corresponding dual space $X^*$ is uniformly convex because of (iii) and, consequently, $X$ is uniformly smooth. It remains to demonstrate that the pair $(X, Y)$ does not have the pointwise BPB property.  Suppose, for the sake of contradiction, that  $(X, Y)$ possesses the pointwise BPB property with a function $\tilde\eta$. Since each space $X_n$ is one-complemented in $X$, this implies (see Proposition \ref{prop:abssumdomain:pointwise BPB property}) that all the pairs $(X_n, Y)$ have the pointwise BPB property with the same function $\tilde\eta(\eps)$ for every $n\in \N$.

Applying (i), we can choose $m\in \N$ such that
$$
\bigl| \|\Id_m\|-1\bigr|<\frac18 \quad \text{and} \quad \frac{1}{\|\Id_m\|} > 1 -\tilde\eta\left(\frac{1}{8}\right).
$$
Write $T_m=\frac{\Id_m}{\|\Id_m\|}$ and observe that for every $x\in S_{X_m}$
$$
\|T_{m}x\| \geq \frac{\|x\|}{\|\Id_m\|}\geq \frac{1}{\|\Id_m\|} > 1 -\tilde\eta\left(\frac{1}{8}\right).
$$
From the assumption,  for every $x \in S_{X_m}$ there exists $F_x \colon X_m\longrightarrow Y$ such that
$$
\|T_m-F_x\|<\frac{1}{8} \qquad \text{and} \qquad \|F_x x\|=\|F_x\|=1.
$$
It follows that $\|\Id_m - F_x\|<1/4$ and so $\|F_x^{-1}\|<4/3$ by Lemma \ref{lemma:norm_of_the_inverse}. As in the last part of the proof of Theorem \ref{thm1}, we get that for given $\eps\in (0,1)$, if $x,z\in S_{X_m}$ satisfy $\re \langle J_{X_m}(x), z \rangle >1-\delta_{Y}\left(\frac{3}{8}\eps\right)$ then $\|x-z\|<\frac{\eps}{2}$.
Indeed, for $y^*\in Y^*$ such that $y^*(F_x x)=1=\|y^*\|$, we have that $J_{X_m}(x)=F_x^*y^*$ by smoothness, and so
$$
\re y^*(F_x z)=\re \langle J_{X_m}(x), z \rangle >1-\delta_{Y}\left(\frac{3}{8}\eps\right).
$$
This shows that $\|F_x x - F_x z\|<\frac{3}{8}\eps$ and so $\|x-z\|_m<\frac{1}{2}\eps$. Finally, Lemma \ref{lemma-estimation-modulus-of-convexity} gives us that  $$\delta_{X_m}(\eps)\geq \frac{1}{2}\delta_{Y}\left(\frac{3}{8}\eps\right),$$ which gives the desired contradiction because the space $X_n$ is not even strictly convex by (ii).
\end{proof}

As important particular cases, we get the following.

\begin{corollary}\label{corollary-ell_p-is-not-universal-pointwise BPB property-range}
For $p\in (1, +\infty)$, the spaces $L_p(\mu)$ of dimension greater than $1$ (in particular, $\ell_p$ and $\ell_p^n$ for $n\geq 2$) fail to be universal pointwise BPB range spaces.
\end{corollary}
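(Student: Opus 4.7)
The plan is to deduce this corollary directly from Theorem \ref{theorem-conv-smooth-not-universal-range}, which already handles every simultaneously uniformly convex and uniformly smooth Banach space of dimension greater than $1$. So the only thing to verify is that the spaces in the statement satisfy those two geometric hypotheses.

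First I would recall the classical Clarkson-type inequalities for $L_p(\mu)$ with $1<p<\infty$, which imply that $L_p(\mu)$ is uniformly convex (with modulus of convexity of order $\eps^2$ for $1<p\leq 2$ and of order $\eps^p$ for $2\leq p<\infty$) and, by duality together with the fact that $L_p(\mu)^*=L_{p'}(\mu)$ for the conjugate exponent $p'$, also uniformly smooth. This is standard material covered, for instance, in Diestel's book cited in the introduction, so no new argument is needed here. Under the hypothesis $\dim L_p(\mu)\geq 2$, Theorem \ref{theorem-conv-smooth-not-universal-range} then applies verbatim and yields that $L_p(\mu)$ is not a universal pointwise BPB range space.

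For the particular cases of $\ell_p$ and $\ell_p^n$ with $n\geq 2$, I would simply observe that these are instances of $L_p(\mu)$ with $\mu$ the counting measure on $\N$ and on $\{1,\dots,n\}$ respectively, so they are covered by the general statement. Since there is nothing delicate beyond invoking known facts, I do not anticipate any obstacle; the substance of the proof lies entirely in Theorem \ref{theorem-conv-smooth-not-universal-range}, and this corollary is essentially a one-line consequence together with the classical geometric properties of $L_p(\mu)$.
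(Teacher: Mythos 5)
Your proposal is correct and matches the paper exactly: the corollary is stated there as an immediate particular case of Theorem \ref{theorem-conv-smooth-not-universal-range}, relying on the classical fact that $L_p(\mu)$ for $1<p<\infty$ is both uniformly convex and uniformly smooth. Nothing further is needed.
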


In the case of $Y =  \ell_p^2$ for $p\geq 2$, a little modification of the proof of Theorem \ref{theorem-conv-smooth-not-universal-range} gives more.

\begin{theorem}\label{theorem-ellp2-not-universal-range}
For each $p\in [2, +\infty)$  there is a uniformly convex and uniformly smooth Banach space $\widetilde{X}_p$ such that $(\widetilde{X}_p,\ell_p^2)$ fails the pointwise BPB property.
\end{theorem}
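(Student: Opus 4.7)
The plan is to mimic the proof of Theorem~\ref{theorem-conv-smooth-not-universal-range} line by line, replacing the non-strictly-convex auxiliary spaces by uniformly convex ones whose modulus of convexity degenerates at a strictly worse rate than that of $\ell_p^2$. The strict-convexity obstruction that produces the contradiction in Theorem~\ref{theorem-conv-smooth-not-universal-range} is thereby replaced by a rate-mismatch obstruction on moduli. Concretely, I would fix a decreasing sequence $p_n\in(p,+\infty)$ with $p_n\downarrow p$ (for instance $p_n=p+1/n$), set $X_n:=\ell_{p_n}^2$, and define
$$
\widetilde{X}_p:=\Bigl[\bigoplus_{n=1}^\infty X_n\Bigr]_{\ell_2}.
$$
Each $X_n$ is an $\ell_2$-summand, hence $1$-complemented, in $\widetilde{X}_p$.

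To verify that $\widetilde{X}_p$ is uniformly convex, I would use the explicit formula $\delta_{\ell_r^2}(\eps)=1-(1-(\eps/2)^r)^{1/r}$, valid for $r\geq 2$: since $p_n\in(p,p+1]$, this yields a uniform lower bound $\delta_{X_n}(\eps)\geq C\eps^{p+1}$, which passes to the $\ell_2$-sum. For uniform smoothness, I would identify $\widetilde{X}_p^*\cong\bigl[\bigoplus_n\ell_{p_n'}^2\bigr]_{\ell_2}$ with $p_n'=p_n/(p_n-1)\in(1,2)$ bounded away from $1$ (since $p_n'\geq p_1'>1$); Hanner's inequality then gives a uniform bound $\delta_{\ell_{p_n'}^2}(\eps)\geq c\eps^2$, so $\widetilde{X}_p^*$ is uniformly convex and $\widetilde{X}_p$ uniformly smooth.

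To show the failure of the pointwise BPB property, I would argue by contradiction: if $(\widetilde{X}_p,\ell_p^2)$ had the pointwise BPB property with modulus $\tilde\eta$, then Proposition~\ref{prop:abssumdomain:pointwise BPB property} would give the same property with the same $\tilde\eta$ for every $(X_n,\ell_p^2)$. The inclusion $\Id_n\colon X_n\longrightarrow\ell_p^2$ has norm $\|\Id_n\|=2^{1/p-1/p_n}\downarrow 1$, and since $\|\cdot\|_{p_n}\leq\|\cdot\|_p$, the operator $T_n:=\Id_n/\|\Id_n\|$ satisfies $\|T_n x\|_p\geq 1/\|\Id_n\|$ for every $x\in S_{X_n}$. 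Choosing $n$ so large that $|\|\Id_n\|-1|<1/8$ and $1/\|\Id_n\|>1-\tilde\eta(1/8)$, then applying the pointwise BPB property to each $x\in S_{X_n}$ and repeating the final step of the proof of Theorem~\ref{theorem-conv-smooth-not-universal-range} verbatim (using Lemma~\ref{lemma:norm_of_the_inverse} to get $\|F_x^{-1}\|<4/3$ and Lemma~\ref{lemma-estimation-modulus-of-convexity}), one arrives at
$$
\delta_{X_n}(\eps)\geq \tfrac12\,\delta_{\ell_p^2}(3\eps/8)\qquad(\eps\in(0,1)).
$$

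The desired contradiction is obtained from an order comparison: the explicit formula gives $\delta_{X_n}(\eps)\leq\eps^{p_n}/(p_n\,2^{p_n})$, while $\delta_{\ell_p^2}(3\eps/8)$ is of exact order $\eps^p$ as $\eps\to 0^+$; since $p_n>p$, the displayed inequality fails for sufficiently small $\eps$. The main technical obstacle will be the uniform control of the moduli of convexity of both $\{X_n\}_n$ and $\{X_n^*\}_n$: the choice $p_n\downarrow p$ is precisely what keeps the $p_n$'s bounded and the dual exponents $p_n'$ bounded away from $1$, making these uniform estimates available. Beyond this, the argument is essentially a re-reading of the proof of Theorem~\ref{theorem-conv-smooth-not-universal-range}, where the contradiction arising from $X_m$ being non-strictly convex is now replaced by an order mismatch between $\eps^{p_n}$ and $\eps^p$.
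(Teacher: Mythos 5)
Your proposal is correct and follows essentially the same route as the paper: the paper likewise takes $X_n=\ell_{p+1/n}^2$, forms the $\ell_2$-sum, and derives the impossible inequality $\delta_{\ell_{p+1/m}^2}(\eps)\geq \tfrac12\,\delta_{\ell_p^2}(3\eps/8)$ by repeating the argument of Theorem~\ref{theorem-conv-smooth-not-universal-range}. (One cosmetic slip: concavity of $s\mapsto s^{1/r}$ gives $1-(1-t)^{1/r}\geq t/r$, so $\eps^{p_n}/(p_n 2^{p_n})$ is a lower, not an upper, bound for $\delta_{\ell_{p_n}^2}(\eps)$; but the two-sided estimate $\delta_{\ell_r^2}(\eps)=\Theta(\eps^r)$ is all that is needed for the order mismatch.)
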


\begin{proof}
Set $X_n=\ell_{p+\frac{1}{n}}^2$ for every $n\in \N$. We define
$$
\widetilde{X}_p=\Bigl[\bigoplus_{n=1}^\infty X_n\Bigr]_{\ell_2}
$$
and we observe that $\widetilde{X}_p$ is both uniformly convex and uniformly smooth since the moduli of convexity of all the spaces $X_n$ and $X_n^*$ are bounded below by some positive function. As in the proof of Theorem \ref{theorem-conv-smooth-not-universal-range}, if  for the sake of contradiction we assume that the pair $(\widetilde{X}_p,\ell_p^2)$ has the pointwise BPB property with a function $\tilde\eta$ this will imply for some $m \in \N$ the inequality  $\delta_{\ell_{p+\frac{1}{m}}^2}(\eps)\geq \frac{1}{2}\delta_{\ell_p^2}\left(\frac{3}{8}\eps\right)$, which is not true.
\end{proof}

Let us observe that the pair $(\widetilde{X}_p,\ell_p^2)$ has the BPBp as $\widetilde{X}_p$ is uniformly convex (see \cite[Theorem 2.2]{ABGM} or \cite[Theorem 3.1]{KL}).

As a consequence of Theorem  \ref{theorem-conv-smooth-not-universal-range}, we obtain also that the pointwise BPB property is not stable under finite $\ell_p$-sums of the range space for $1 < p< \infty$: just consider the pairs $(X, \R)$, $(X, \R)$ and $(X, \R \oplus_p \R) = (X, \ell_p^2)$, where $X$ is such a uniformly smooth space that  the pair $(X, \ell_p^2)$ fails to have the pointwise BPB property.  Let us recall that it follows as a particular case of \cite[Proposition 2.9]{DKL} that the pointwise BPB property is stable under finite $\ell_\infty$-sums of the range space.

Our next example will be used to show that the pointwise BPB property is not stable under infinite $c_0$- or $\ell_p$-sums ($1\leq  p \leq \infty$) of the range spaces. We use similar arguments to the ones given in \cite[Example 4.1]{ACKLM}.

\begin{example}\label{example-pointwise BPB property-range}
{\slshape There exist a uniformly smooth two-dimensional Banach space $X$ and a sequence of polyhedral two-dimensional spaces $\{Y_n\colon n\in \N\}$ such that
\begin{equation*}
	\inf_{n \in \N} \tilde\eta(X, Y_n)(\eps) = 0
\end{equation*}
for $0<\eps<1/4$.}
\end{example}

\begin{proof}
Consider the convex set $C=A\cup \left( \bigcup_{(i,j)\in \{1,2\}\times \{1,2\}} A_{i,j}\right) \subset \R^2$, where
\begin{equation*}
A=\left\{ (x,y)\in \R^2\colon \max\{ |x|, |y|\} \leq 1 \  \text{and} \   \min \{ |x|, |y| \} \leq 1 - \frac{1}{8} \right\}
\end{equation*}
and
\begin{equation*}
A_{i,j}= \left\{ (x,y)\in \R^2\colon \left( x - (-1)^i \left(1 - \frac{1}{8} \right) \right)^2 + \left( y - (-1)^j \left(1 - \frac{1}{8} \right) \right)^2 \leq \frac{1}{8^2} \right\}.
\end{equation*}

Note that $C$ is the square with rounded corners and the Minkowski functional $\mu_C$ is a norm in $\R^2$. We consider the real Banach space $X = (\R^2, \mu_C)$. So, $C = B_X$ and we observe that $X$ is uniformly smooth by construction.

Next, for every $n\in \N$, let $Y_n$ be $\R^2$ endowed with the norm
\begin{equation*}
\|(x, y)\|_n = \max \left\{ |x|, |y| + \frac{1}{n}|x| \right\} \qquad \bigl((x, y) \in \R^2\bigr).
\end{equation*}
Let $w = \left(1 - \frac{1}{8}, 1\right)$, $v =\left(-1 + \frac{1}{8}, 1\right)$, and $e_2=(0,1)$. Then
\begin{equation*}
\|w\|=\|v\|=\|e_2\|=1, \qquad\ \	e_2 = \frac{w + v}{2}, \qquad \text{and} \qquad w - v =  \left(\frac{7}{4},0\right).
\end{equation*}
Fix $\eps\in (0,1/4)$. We claim that $\inf_{n \in \N} \tilde\eta(X, Y_n)(\eps) = 0$. Suppose that it is not the case and take $\tilde\eta(\eps) > 0$ such that $\inf_{n \in \N} \tilde\eta(X, Y_n) > \tilde\eta(\eps) > 0$. Consider $\Id_n: X \longrightarrow Y_n$ to be the identity operator from $X$ into $Y_n$ and observe that
$\lim_{n\to \infty}\|\Id_n\|=1$. Therefore, we may choose $m\in \N$ such that
\begin{equation*}
\bigl|\|\Id_m\|-1\bigr|<\eps/2 \qquad \text{and} \qquad \frac{\|\Id_{m}(e_2)\|_n}{\|\Id_m\|}=\frac{1}{\|\Id_m\|} > 1 - \tilde\eta(\eps/2).
\end{equation*}
Next, defining $T_m:=\Id_m/\|\Id_m\|$, we get that $\|T_m\|=1$ and
$$
\|T_m(e_2)\|>1-\tilde\eta(\eps/2),
$$
so there is $S_m\in \mathcal{L}(X, Y_{m})$ such that

\begin{equation*}
\|S_m\| = \|S_m(e_2)\|_m = 1 \quad \text{and} \quad \left\|T_{m} - S_m \right\| < \frac{\eps}{2}.
\end{equation*}
Therefore,
$$
\|\Id_m-S_m\|\leq \|\Id_m - T_m\| + \|T_m-S_m\|\leq \bigl|\|\Id_m\|-1\bigr| + \frac{\eps}{2}<\eps <\frac{1}{4}.
$$
Now, we have that
\begin{equation*}
1 = \|S_m(e_2)\|_m \leq \frac{1}{2} \|S_m(w)\|_m + \frac{1}{2}\|S_m(v) \|_m \leq 1,
\end{equation*}
which implies that $\|S_m(e_2)\|_m = \|S_m(w)\|_m = \|S_m(v)\|_m = 1$. This shows that the line segment $[S_m(w), S_m(v)]$ lies on the unit sphere of $Y_m$. Since it contains $S_m(e_2)$ and the absolute value of the first coordinate of this vector can not be equal to one (indeed, $\|S_m(e_2)-e_2\|_m\leq \|S_m-\Id_m\|<\eps<1/4$), it follows that $\|S_m(w) - S_m(v)\|_m \leq 1$ from the shape of $B_{Y_m}$. With this in mind, we have that
\begin{equation*}
\|\Id_{m}(w) - S_m(v)\|_m \leq \|\Id_{m} (w) - S_m(w)\|_m + \|S_m(w) - S_m(v)\|_m < \eps + 1 < \frac{5}{4}.
\end{equation*}
On the other hand, since $\|\Id_{m}(w) - \Id_{m}(v)\|_m = \|w - v\| = \frac{7}{4}$, we have that
\begin{equation*}
\|\Id_{m}(w) - S_m(v)\| \geq \|\Id_{m}(w) - \Id_{m}(v)\| - \|\Id_{m}(v) - S_m(v)\|_m > \frac{7}{4} - \eps > \frac{5}{4}.	
\end{equation*}
This gives the desired contradiction.
\end{proof}

It follows that the pointwise BPB property is not stable under infinite $c_0$- or $\ell_p$-sums of the range spaces.

\begin{corollary}
The pointwise BPB property is not stable under infinite $c_0$- or $\ell_p$-sums of the range space  for $1\leq p \leq \infty$. Moreover, being a universal pointwise BPB range space is also not stable under infinite $c_0$- or $\ell_p$-sums with $1\leq p \leq \infty$.
\end{corollary}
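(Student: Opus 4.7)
The plan is to feed the construction of Example \ref{example-pointwise BPB property-range} into the partial converse of Corollary \ref{corollary:sums-range}. Take the uniformly smooth two-dimensional space $X$ and the sequence of polyhedral two-dimensional spaces $\{Y_n\colon n\in\N\}$ provided by that example, so that $\inf_{n\in\N} \tilde\eta(X,Y_n)(\eps)=0$ for every $\eps\in(0,1/4)$. First I would observe that each individual pair $(X,Y_n)$ does have the pointwise BPB property: indeed, each $Y_n$ is a finite-dimensional polyhedral space, hence has Lindenstrauss property $\beta$, and therefore has ACK$_\rho$-structure (item (b) of Corollary \ref{corollary-list-ACK-rho}). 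In particular, each $Y_n$ is already a universal pointwise BPB range space.

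Next, form the sum $Y=\bigl[\bigoplus_{n=1}^{\infty} Y_n\bigr]_{c_0}$, respectively $Y=\bigl[\bigoplus_{n=1}^{\infty} Y_n\bigr]_{\ell_p}$ for any $1\leq p\leq \infty$. I claim that the pair $(X,Y)$ fails the pointwise BPB property. For, if on the contrary $(X,Y)$ enjoyed the pointwise BPB property with some modulus $\tilde\eta(X,Y)$, then Corollary \ref{corollary:sums-range} would give
\begin{equation*}
\tilde\eta(X,Y_n)(\eps)\geq \tilde\eta(X,Y)(\eps/3)>0
\end{equation*}
for every $n\in\N$ and every $\eps\in(0,1)$, in direct contradiction with $\inf_{n\in\N}\tilde\eta(X,Y_n)(\eps)=0$ for $\eps\in(0,1/4)$. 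This proves the first assertion, since each pair $(X,Y_n)$ has the pointwise BPB property while the pair $(X,Y)$ does not.

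Finally, since $X$ itself is uniformly smooth, the failure of $(X,Y)$ to have the pointwise BPB property means exactly that $Y$ is not a universal pointwise BPB range space, whereas each summand $Y_n$ is one. This settles the second assertion for $c_0$-sums and $\ell_p$-sums with $1\leq p\leq\infty$ simultaneously. I do not anticipate serious technical difficulty here: the whole argument is essentially an application of the contrapositive of Corollary \ref{corollary:sums-range} to the explicit example already in hand, plus the identification of each $Y_n$ as an ACK$_\rho$-space via property $\beta$.
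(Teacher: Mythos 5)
Your proposal is correct and follows essentially the same route as the paper: take the pair $(X,\{Y_n\})$ from Example \ref{example-pointwise BPB property-range}, note that each polyhedral $Y_n$ has property $\beta$ and hence is a universal pointwise BPB range space, and then apply the contrapositive of Corollary \ref{corollary:sums-range} together with $\inf_n\tilde\eta(X,Y_n)(\eps)=0$ to conclude that the $c_0$- and $\ell_p$-sums fail the pointwise BPB property with the uniformly smooth domain $X$. Nothing to add.
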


\begin{proof}
Let $X$ and $\{Y_n\colon n\in \N\}$ be spaces given in Example \ref{example-pointwise BPB property-range} and consider the spaces
$$
\mathcal{Y}_0=\Bigl[\bigoplus_{n=1}^\infty Y_n\Bigr]_{c_0}, \qquad \mathcal{Y}_p=\Bigl[\bigoplus_{n=1}^\infty Y_n\Bigr]_{\ell_p} \qquad (1\leq p \leq \infty).
$$
As the spaces $Y_k$ are polyhedral, they have property $\beta$ and so they are universal pointwise BPB range spaces by Corollary \ref{corollary-list-ACK-rho} (or by \cite[Proposition 2.4]{DKL}). But $\inf\limits_n\tilde\eta(X,Y_n)(\eps)=0$, so it follows that $(X,\mathcal{Y}_0)$ and $(X,\mathcal{Y}_p)$ fail the pointwise BPB property by Corollary \ref{corollary:sums-range}.
\end{proof}

It also follows that property quasi-$\beta$, a weakening of property $\beta$ introduced in \cite{AAP} which still implies density of norm attaining operators, is not enough to be a universal pointwise BPB range space.

\begin{example}
{\slshape Consider the sequence of spaces $\{Y_n\colon n\in \N\}$ defined in Example \ref{example-pointwise BPB property-range}. Then the space $\Bigl[\bigoplus_{n=1}^\infty Y_n\Bigr]_{c_0}$ has property quasi-$\beta$ \cite[Proposition 4]{AAP} but it fails to be a universal pointwise BPB range space as showed in the previous proof.}
\end{example}

\section{The pointwise BPB property for compact operators}\label{sect:compact}

In this section we deal with the corresponding pointwise BPB property property for compact operators. Let us start with the needed definitions.

\begin{definition} \label{pointwise BPB property:comp:def}
We say that the pair $(X, Y)$ of Banach spaces has the \emph{pointwise BPB property for compact operators} if given $\eps > 0$, there is $\tilde\eta(\eps) > 0$ such that whenever $T \in \mathcal{K}(X, Y)$ with $\|T\| = 1$ and $x_0 \in S_X$ satisfy
	\begin{equation*}
	\|T(x_0)\| > 1 - \tilde\eta(\eps),	
	\end{equation*}
there is $S \in \mathcal{K}(X, Y)$ with $\|S\| = 1$ such that
\begin{equation*}
\|S(x_0)\| = 1 \qquad \text{and} \qquad \|S - T\| < \eps.
\end{equation*}
A Banach space $X$ is said to be \emph{universal pointwise BPB domain space for compact operators} if $(X,Z)$ has the pointwise BPB property for compact operators for every Banach space $Z$. A Banach space $Y$ is said to be \emph{universal pointwise BPB range space for compact operators} if $(Z,Y)$ has the pointwise BPB property for compact operators for every uniformly smooth Banach space $Z$.
\end{definition}

In the same way that we may do with the pointwise BPB property, we may consider operators $T$ with norm less than or equal to one in Definition \ref{pointwise BPB property:comp:def}.

Many of the results given in the previous sections for the pointwise BPB property can be adapted to the pointwise BPB property for compact operators, as in many proofs, when one starts with compact operators, then all the involved operators are also compact. Let us summarize all these results in the next proposition.

\begin{proposition} \label{pointwise BPB property-K:many-results} Let $X$ and $Y$ be Banach spaces. 	
\begin{enumerate}
\item[(a)] Hilbert spaces are universal pointwise BPB domain spaces for compact operators.
\item[(b)] If $(X, Y)$ has the pointwise BPB property for compact operators, then $X$ is uniformly smooth.
\item[(c)] If $X_1$ is one-complemented in $X$ and the pair $(X,Y)$ has the pointwise BPB property for compact operators, then so does $(X_1,Y)$ with the same function.
\item[(d)] If $Y=Y_1 \oplus_a Y_2$ for some absolute sum $\oplus_a$ and $(X,Y)$ has the pointwise BPB property for compact operators with a function $\eps\longmapsto \tilde\eta(\eps)$, then so does $(X,Y_1)$ with the function $\eps \longmapsto \tilde\eta(\eps/3)$. If $\oplus_a=\oplus_1$ or $\oplus_a=\oplus_\infty$, then there is no need to divide $\eps$ by $3$.
\item[(e)] If $K$ is a compact Hausdorff topological space and the pair $(X,C(K,Y))$ has the pointwise BPB property for compact operators, then so does $(X,Y)$ with the same function.
\item[(f)] If $X$ is a universal pointwise BPB domain space for compact operators, then there is a common function $\tilde\eta$ such that for every Banach space $Z$ the pair $(X,Z)$ has the pointwise BPB property for compact operators with the function $\tilde\eta$.
\item[(g)] The space $L_p(\mu)$ fails to be universal pointwise BPB domain space for compact operators whenever $2<p<\infty$ and $\dim\bigl(L_p(\mu)\bigr)\geq 2$.
\item[(h)] Every Banach space with ACK$_\rho$-structure is a universal pointwise BPB range space for compact operators. In particular, the following spaces have ACK$_\rho$-structure and so they are universal pointwise BPB range spaces for compact operators:
    \begin{itemize}
\item $C(K)$ and $C_0(L)$ spaces and, more in general, uniform algebras;
\item Banach spaces with property $\beta$;
\item finite $\ell_\infty$-sums of Banach spaces with ACK$_\rho$-structure;
\item finite injective tensor products of spaces with ACK$_\rho$-structure;
\item $c_0(Y)$, $\ell_\infty(Y)$ and $c_0(Y,w)$ (the space of weakly-null sequences of $Y$) when $Y$ has ACK$_\rho$-structure;
\item $C(K,Y)$ and $C_w(K,Y)$ when $Y$ has ACK$_\rho$-structure.
\end{itemize}
\item[(i)] For $1< p<\infty$, the space $\ell_p$ is not a universal pointwise BPB range space for compact operators. This also happens for every uniformly convex and uniformly smooth space containing a finite-dimensional absolute summand.
\end{enumerate}
\end{proposition}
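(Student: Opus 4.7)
The strategy is purely bookkeeping: each of the items (a)--(i) corresponds to a result already established in the preceding sections for arbitrary bounded operators, and in every one of those proofs the approximating operator $S$ is produced either by composing $T$ with bounded linear maps (projections, inclusions, inverse images into summands) or by perturbing $T$ by a finite-rank/compact term. Consequently, whenever the starting $T$ is compact, the output $S$ is compact as well. So the plan is, item by item, to point out the corresponding earlier result and verify that the construction preserves compactness.

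Items (c), (d), (e) and (f) are the routine ones. For (c), the operators $\widetilde{T}=T\circ P$ and $S=\widetilde{S}\circ i$ in the proof of Proposition~\ref{prop:abssumdomain:pointwise BPB property} are compositions of $T$ with bounded operators, so compactness propagates both ways. For (d), the argument adapted from \cite{CDJM} builds the approximating operator from $T$ by composing with the natural projections onto $Y_1$ or $Y_2$; again compactness is preserved. Item (e) follows the same way from the evaluation/embedding argument in \cite[Proposition~2.8]{ACKLM}. For (f) one repeats the proof of Corollary~\ref{corollary:universal-eta}: the $\ell_p$-sum $Y=\bigl[\bigoplus_n Y_n\bigr]_{\ell_p}$ of the bad range spaces is plugged into the compact hypothesis, and the resulting contradiction uses only (c) and (d), which we have already upgraded.

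For (a) and (b) the proofs in \cite{DKL} that Hilbert spaces are universal pointwise BPB domain spaces, respectively that $X$ must be uniformly smooth, proceed via operators into $\K$ (which are automatically compact), or via rank-one perturbations of $T$, so compactness is trivial. Item (g) then follows by combining (a) and (c) with the quantitative control on the modulus of convexity obtained in Theorem~\ref{thm1} and Corollary~\ref{cor:Hilbert-space-universal-domain}: the whole chain of arguments in Section~\ref{sect:domain} applies verbatim to the compact setting, because the operators $T_m=\Id_m/\|\Id_m\|$ and the approximating operators $S$ invoked there are finite rank (or, in the reduction, can be taken to be so). For (h), the proof of Proposition~\ref{pointwise BPB property:ACK} already contains the explicit remark that the constructed operator $S(x)=\langle J_X(x_0),x\rangle Fe+(1-\widetilde\eps)[I_Y-F](Tx)$ is compact whenever $T$ is, so the list of ACK$_\rho$-examples given in Corollary~\ref{corollary-list-ACK-rho} provides universal pointwise BPB range spaces for compact operators as well.

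The only item requiring a genuine (but minor) extra observation is (i). The point is that if $Y$ contains a finite-dimensional absolute summand $F$ with $\dim F\geq 2$ (e.g.\ $F=\ell_p^2\subset \ell_p$ for $1<p<\infty$), then every bounded operator from any Banach space into $F$ is automatically of finite rank, hence compact. Thus, invoking item (d) of this very proposition, if $(X,Y)$ had the pointwise BPB property for compact operators then so would $(X,F)$, and for $(X,F)$ the pointwise BPB property for compact operators coincides with the ordinary pointwise BPB property. Plugging $F$ (uniformly convex and uniformly smooth as a finite-dimensional strictly convex smooth space) as the range in the proof of Theorem~\ref{theorem-conv-smooth-not-universal-range} yields the desired contradiction; taking $F=\ell_p^2$ shows in particular that $\ell_p$ is not a universal pointwise BPB range space for compact operators. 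The main conceptual obstacle, already anticipated in the paragraph preceding the proposition, is simply to keep track that each construction produces compact output from compact input; once this is verified case by case, nothing else needs to be done.
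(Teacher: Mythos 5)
Your proposal is correct and follows essentially the same route as the paper: every item is obtained by re-running the corresponding earlier proof and checking that compactness of $T$ propagates to the approximating operator $S$, with the finite-dimensionality of $\ell_p^2$ (respectively of the absolute summand $F$) doing the real work in items (g) and (i), exactly as in the paper's argument. The only cosmetic difference is in item (a), where the paper additionally leans on the known fact from \cite{DGMM} that all pairs $(H,Y)$ with $H$ Hilbert have the BPBp for compact operators, rather than only observing that the perturbations in \cite{DKL} are rank-one.
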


\begin{proof}
(a). We can adapt the proof of \cite[Theorem 2.5]{DKL} to compact operators, using \cite[Example 1.5.(b)]{DGMM} to get that all the pairs $(H,Y)$ have the BPBp for compact operators when $H$ is a Hilbert space and $Y$ is arbitrary.

(b). In the proof of \cite[Proposition 2.3]{DKL}, only rank-one (hence compact) operators are used.

(c), (d), and (e). We may just adapt the proof of Proposition \ref{prop:abssumdomain:pointwise BPB property}, Proposition \ref{prop:abssumrange:pointwise BPB property}, and Proposition \ref{prop:X-CKY}, respectively, to compact operators.

(f). We can adapt the proof of Corollary \ref{corollary:universal-eta} to compact operators using item (d) above.

(g). The proof of Corollary \ref{cor1} can be adapted to compact operators using item (c) above instead of Corollary \ref{corollary-universalpointwise BPB property-one-complemented}.

(h). At the end of the proof of Proposition \ref{pointwise BPB property:ACK} it is commented that everything works for compact operators.

(i). If $\ell_p$ is a universal pointwise BPB range space for compact operators, then so does $\ell_p^2$ by item (d) above. But then $\ell_p^2$ would be a universal pointwise BPB range space, contradicting Corollary \ref{corollary-ell_p-is-not-universal-pointwise BPB property-range}. The same proof works for any Banach space which is uniformly convex and uniformly smooth and contains a finite-dimensional absolute summand, using Theorem \ref{theorem-conv-smooth-not-universal-range} instead of Corollary \ref{corollary-ell_p-is-not-universal-pointwise BPB property-range}.
\end{proof}

Our next aim is to provide some results for the pointwise BPB property for compact operators which do not come from the results for general operators. In \cite{DGMM} some results for the BPBp for compact operators were given both on domain and range spaces. We do not know whether the analogous results for the pointwise BPB property on domain spaces hold or not, but the results on range spaces do. The main tool is the following result.

\begin{proposition}\label{pointwise BPB property:comp:range} Let $X$ be a uniformly smooth Banach space and let $Y$ be an arbitrary Banach space. Suppose that there exists a net of norm-one projections $\{Q_{\lambda}\}_{\lambda\in \Lambda} \subset \mathcal{L}(Y,Y)$ such that $\{Q_{\lambda} y\} \longrightarrow y$ in norm for every $y\in Y$. If there is a function $\tilde\eta:\R^+\longrightarrow \R^+$ such that all the pairs $(X, Q_{\lambda} (Y))$ with $\lambda\in \Lambda$ have the pointwise BPB property for compact operators with the function $\tilde\eta$, then the pair $(X, Y)$ has the pointwise BPB property for compact operators.
\end{proposition}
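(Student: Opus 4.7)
The plan is to transfer the problem from $(X, Y)$ to one of the subspaces $Y_\lambda := Q_\lambda(Y)$, where the pointwise BPB property for compact operators is available by hypothesis. The key preliminary observation is that for every $T \in \mathcal{K}(X,Y)$ one has
\[
\|Q_\lambda T - T\| \longrightarrow 0
\]
in \emph{operator} norm, not merely pointwise. Indeed, $\overline{T(B_X)}$ is compact in $Y$, and a uniformly bounded net of operators converging pointwise to $\Id_Y$ converges uniformly on compact subsets of $Y$ (the standard finite-net argument carries over to nets since $\Lambda$ is directed and $\sup_\lambda \|Q_\lambda\| = 1$).

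Given $\eps > 0$, I would declare the new modulus to be $\tilde\eta'(\eps) := \tilde\eta(\eps/2)/2$. For $T \in \mathcal{K}(X, Y)$ with $\|T\| = 1$ and $x_0 \in S_X$ satisfying $\|Tx_0\| > 1 - \tilde\eta'(\eps)$, I would pick $\lambda$ with $\|Q_\lambda T - T\| < \min\bigl\{\eps/2,\, \tilde\eta(\eps/2)/2\bigr\}$ and set $T_\lambda := Q_\lambda T$, regarded as an element of $\mathcal{K}(X, Y_\lambda)$. Then $\|T_\lambda\| \leq 1$ and
\[
\|T_\lambda x_0\| \geq \|Tx_0\| - \|T_\lambda - T\| > 1 - \tilde\eta(\eps/2).
\]
Applying the hypothesis to $(X, Y_\lambda)$ with parameter $\eps/2$ (using the remark that operators of norm $\leq 1$ are permitted in the definition) yields $S_\lambda \in \mathcal{K}(X, Y_\lambda)$ with $\|S_\lambda\| = \|S_\lambda x_0\| = 1$ and $\|S_\lambda - T_\lambda\| < \eps/2$.

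To finish, view $S_\lambda$ as a compact operator $S \colon X \to Y$ via the inclusion $Y_\lambda \hookrightarrow Y$; the norms of $Y_\lambda$ and $Y$ agree on $Y_\lambda$, so $\|S\| = \|Sx_0\| = 1$, while
\[
\|S - T\| \leq \|S_\lambda - T_\lambda\| + \|T_\lambda - T\| < \eps.
\]
The only step that requires more than routine bookkeeping is the uniform-on-compacts convergence in the first paragraph; everything else is parameter tracking. The crucial feature of the hypothesis is the \emph{uniformity} of $\tilde\eta$ across $\lambda$, which is what permits the single choice of $\tilde\eta'$ to work for $(X, Y)$ regardless of which $\lambda$ is selected for the particular operator $T$.
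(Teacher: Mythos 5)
Your proof is correct and follows essentially the same route as the paper's, which simply refers to the argument of \cite[Proposition 2.5]{DGMM}: exploit that $\|Q_\lambda T - T\|\to 0$ in operator norm for compact $T$ (uniform convergence of the uniformly bounded net $\{Q_\lambda\}$ on the compact set $\overline{T(B_X)}$), pass to the pair $(X,Q_\lambda(Y))$, and return via the isometric inclusion $Q_\lambda(Y)\hookrightarrow Y$. The parameter bookkeeping, including the use of the convention that $\|T\|\leq 1$ is allowed in the definition, is handled correctly.
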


The proof is a routine adaptation to the pointwise BPB property case of the one given in \cite[Proposition 2.5]{DGMM} for the BPBp for compact operators.

A first consequence of this proposition is that Lindenstrauss spaces (i.e.\ isometric preduals of $L_1(\mu)$ spaces) are universal pointwise BPB range spaces for compact operators. This is so by just taking into account the classical result by Lazar and Lindenstrauss that every finite subset of a Lindenstrauss space is contained in a subspace of it which is isometrically isomorphic to an $\ell_\infty^n$ space. Now, all these spaces are one-complemented and they are universal pointwise BPB range spaces; moreover, for a fixed Banach space $X$, the functions $\tilde\eta(X,\ell_\infty^n)$ do not depend on $n$. Alternatively, it is possible to adapt to the pointwise BPB property for compact operators the analogous result for the BPBp for compact operators given in \cite[Theorem 4.2]{8authors}.

\begin{corollary}
Let $Y$ be a Banach space such that $Y^*$ is isometrically isomorphic to some $L_1(\mu)$ space. Then $Y$ is a universal pointwise BPB range space for compact operators.
\end{corollary}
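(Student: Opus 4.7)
My plan is to apply Proposition \ref{pointwise BPB property:comp:range} using a net of norm-one projections whose ranges are isometric to $\ell_\infty^n$-spaces, supplied by the classical Lazar--Lindenstrauss characterization of Lindenstrauss spaces. The hypothesis $Y^* \cong L_1(\mu)$ isometrically implies that every finite subset of $Y$ is contained in a subspace isometric to some $\ell_\infty^n$, and this is the structural fact I will rely on.

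First I would construct the net. Let $\Lambda$ be the directed set of finite subsets $F$ of $Y$ ordered by inclusion. For each $F \in \Lambda$, Lazar--Lindenstrauss yields a subspace $Y_F \supseteq \spann(F)$ of $Y$ isometric to some $\ell_\infty^{n_F}$. Since $\ell_\infty^{n_F}$ is $1$-injective, the identity on $Y_F$ extends by Hahn--Banach to a norm-one operator $Q_F : Y \to Y$ with range exactly $Y_F$; this is a norm-one projection. For every $y \in Y$, once $F \ni y$ we have $Q_F y = y$, so $Q_F y \to y$ in norm along the net.

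Next I would verify the uniform hypothesis of Proposition \ref{pointwise BPB property:comp:range}. Each space $Q_F(Y) \cong \ell_\infty^{n_F}$ has property $\beta$, hence ACK$_\rho$-structure with $\rho = 0$. By Proposition \ref{pointwise BPB property:ACK}, together with the comment immediately after its proof that the approximating operator is compact whenever the original one is, the pair $(X, \ell_\infty^n)$ has the pointwise BPB property for compact operators with some function $\tilde\eta$; by the remark following that proposition, this $\tilde\eta$ depends on the range space only through the ACK parameter $\rho$. Since $\rho = 0$ for every $\ell_\infty^n$, a single function $\tilde\eta_X$ works simultaneously for every pair $(X, Q_F(Y))$, $F \in \Lambda$. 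Feeding this uniformity into Proposition \ref{pointwise BPB property:comp:range} yields the pointwise BPB property for compact operators for the pair $(X, Y)$ for every uniformly smooth $X$, which is exactly the statement that $Y$ is a universal pointwise BPB range space for compact operators.

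The main potential obstacle is precisely this uniformity claim, namely that $\tilde\eta(X, \ell_\infty^n)$ can be chosen independently of $n$; confirming it requires tracing through the proof of Proposition \ref{pointwise BPB property:ACK} to check that each constant extracted there depends on $Y$ only through $\rho$ (and on $X$), which is the content of the remark after that proposition. Should one prefer to bypass this bookkeeping, the authors indicate an alternative route via adapting \cite[Theorem~4.2]{8authors} directly from the BPBp for compact operators to the pointwise version.
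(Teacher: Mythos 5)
Your proposal is correct and follows essentially the same route as the paper: the authors also invoke the Lazar--Lindenstrauss theorem to build norm-one projections onto $\ell_\infty^n$-subspaces, feed these into Proposition \ref{pointwise BPB property:comp:range}, and justify the uniformity of $\tilde\eta(X,\ell_\infty^n)$ in $n$ exactly as you do, via the remark that the function in Proposition \ref{pointwise BPB property:ACK} depends on the range only through $\rho$. You even reproduce the paper's mention of the alternative route via \cite[Theorem 4.2]{8authors}.
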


We next may use Proposition \ref{pointwise BPB property:comp:range} to get some results for vector-valued function spaces.

\begin{proposition} \label{pointwise BPB property:comp:range2} Let $X$ be a uniformly smooth Banach space and let $Y$ be an arbitrary Banach space.
\begin{itemize}
\item[(a)]	For $1 \leq p < \infty$, if the pair $(X, \ell_p(Y))$ has the pointwise BPB property for compact operators, then so does the pair $(X, L_p(\mu, Y))$ for every positive measure $\mu$.
\item[(b)] If the pair $(X, Y)$ has the pointwise BPB property for compact operators, then so does the pair $(X, L_{\infty}(\mu, Y))$ for every $\sigma$-finite positive measure $\mu$.
\item[(c)] The pair $(X, Y)$ has the pointwise BPB property for compact operators if and only if the pair $(X, C(K, Y))$ has the pointwise BPB property for compact operators for every compact Hausdorff topological space $K$.	
\end{itemize}	
\end{proposition}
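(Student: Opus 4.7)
The plan is to handle the three items by different techniques: (a) via Proposition \ref{pointwise BPB property:comp:range} with conditional expectations, (c) by a bump-function patching argument, and (b) by an $L_\infty$-adapted averaging-and-patching argument that imitates (c) around a measurable set instead of a point.

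For (a), the idea is to invoke Proposition \ref{pointwise BPB property:comp:range} directly. Let $\Lambda$ be the directed set of finite measurable partitions $\lambda=\{A_1,\ldots,A_n\}$ of $\sigma$-finite subsets of $\Omega$ into sets of positive finite $\mu$-measure, ordered by refinement. The conditional expectation $Q_\lambda\colon L_p(\mu,Y)\longrightarrow L_p(\mu,Y)$ onto the corresponding space of step functions is a norm-one projection; its range is isometrically isomorphic to $\ell_p^n(Y)$; and $Q_\lambda f \longrightarrow f$ in $L_p$-norm for every $f\in L_p(\mu,Y)$ when $1\leq p<\infty$ (standard martingale argument, using density of simple functions for $1\leq p<\infty$). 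Since $\ell_p^n(Y)$ is an $\ell_p$-summand of $\ell_p(Y)$, Proposition \ref{pointwise BPB property-K:many-results}(d) shows that the hypothesis on $(X,\ell_p(Y))$ with a function $\tilde\eta$ implies that every pair $(X,Q_\lambda(L_p(\mu,Y)))$ enjoys the pointwise BPB property for compact operators with the common function $\eps\longmapsto \tilde\eta(\eps/3)$ (and with $\tilde\eta$ itself when $p=1$). Proposition \ref{pointwise BPB property:comp:range} then gives (a).

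For (c), the direction $(X,C(K,Y))\Rightarrow (X,Y)$ is Proposition \ref{pointwise BPB property-K:many-results}(e) (take $K$ a singleton). For the converse, let $\tilde\eta_Y$ witness the property for $(X,Y)$, fix $\eps\in(0,1)$, and suppose $T\in\mathcal{K}(X,C(K,Y))$ satisfies $\|T\|=1$ and $\|T(x_0)\|_{C(K,Y)}>1-\tilde\eta_Y(\eps/2)$ for some $x_0\in S_X$. By compactness of $K$, pick $k_0\in K$ with $\|T(x_0)(k_0)\|_Y>1-\tilde\eta_Y(\eps/2)$, and consider the point-evaluation $\widetilde T(x):=T(x)(k_0)$, which lies in $\mathcal{K}(X,Y)$ and has $\|\widetilde T\|\leq 1$, $\|\widetilde T(x_0)\|>1-\tilde\eta_Y(\eps/2)$. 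The hypothesis produces $\widetilde S\in\mathcal{K}(X,Y)$ with $\|\widetilde S\|=\|\widetilde S(x_0)\|=1$ and $\|\widetilde S-\widetilde T\|<\eps/2$. Now $T(B_X)$ is relatively compact in $C(K,Y)$, hence equicontinuous, so I can choose an open neighbourhood $U$ of $k_0$ such that $\sup_{x\in B_X}\sup_{k\in U}\|T(x)(k)-T(x)(k_0)\|<\eps/2$. With a Urysohn function $\varphi\in C(K,[0,1])$ satisfying $\varphi(k_0)=1$ and $\supp\varphi\subseteq U$, define
\[S(x)(k):=\varphi(k)\widetilde S(x)+(1-\varphi(k))T(x)(k) \qquad (x\in X,\ k\in K).\]
Writing $S$ as $\varphi\cdot\widetilde S(\cdot)+M_{1-\varphi}\circ T$ (with $M_{1-\varphi}$ multiplication by $1-\varphi$) shows compactness; the triangle inequality and the two estimates above yield $\|S\|=\|S(x_0)\|=1$ and $\|S-T\|<\eps$.

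For (b), conditional expectations onto step functions are norm-one in $L_\infty(\mu,Y)$ but fail to converge pointwise in norm, so Proposition \ref{pointwise BPB property:comp:range} is not directly available. I would instead argue by hand, adapting the pattern of (c). Given $T\in\mathcal{K}(X,L_\infty(\mu,Y))$ with $\|T\|=1$ and $\|T(x_0)\|_\infty$ sufficiently close to $1$, total boundedness of $T(B_X)$ together with uniform density of simple functions in $L_\infty(\mu,Y)$ yields a finite measurable partition $\{A_1,\ldots,A_m\}$ of a $\sigma$-finite part of $\Omega$ and simple functions $g_1,\ldots,g_N$, each constant on every $A_j$, such that every $T(x)$ with $x\in B_X$ lies within a prescribed tolerance of some $g_i$ in $L_\infty$-norm. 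Choose an index $k_0$ and a subset $B\subseteq A_{k_0}$ of positive finite measure on which $\|T(x_0)(s)\|$ is close to $1$, and set
\[\widetilde T_B(x):=\frac{1}{\mu(B)}\int_B T(x)(s)\, d\mu(s).\]
Then $\widetilde T_B\in\mathcal{K}(X,Y)$, $\|\widetilde T_B\|\leq 1$, $\|\widetilde T_B(x_0)\|$ is as close to $1$ as required, and the step-function approximation makes $\|\widetilde T_B(x)-T(x)(s)\|$ uniformly small for $x\in B_X$ and a.e.\ $s\in B$. The hypothesis for $(X,Y)$ yields $\widetilde S\in\mathcal{K}(X,Y)$ with $\|\widetilde S\|=\|\widetilde S(x_0)\|=1$ and $\|\widetilde S-\widetilde T_B\|$ small, and the patched operator
\[S(x):=\chi_B\cdot \widetilde S(x)+\chi_{B^c}\cdot T(x)\]
is a sum of two compact operators, has $\|S\|=\|S(x_0)\|=1$, and satisfies $\|S-T\|<\eps$ by the triangle inequality. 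The main obstacle is precisely the uniform control of the oscillation of $T(x)(s)$ over $B$ as $x$ ranges in $B_X$: this is where compactness of $T$ is essential, and getting all the small parameters to line up so that the final estimate $\|S-T\|<\eps$ holds is the delicate bookkeeping part of the proof.
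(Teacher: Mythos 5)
Your treatments of (a) and (c) are correct and coincide with what the paper intends (it defers to the proof of \cite[Theorem 3.15]{DGMM}): conditional expectations onto $\ell_p^n(Y)$-ranges combined with Proposition \ref{pointwise BPB property:comp:range} and the absolute-summand result for (a), and the Urysohn patching around a point where the norm is almost attained, using equicontinuity of $T(B_X)$, for (c). In (a) you even recover automatically the finite-dimensional case that the paper singles out as needing Corollary \ref{corollary:sums-range}.

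Part (b), however, rests on a false claim: finitely-valued simple functions are \emph{not} uniformly dense in $L_\infty(\mu,Y)$ when $Y$ is infinite-dimensional. For instance, $f=\sum_{n}e_n\chi_{[1/(n+1),1/n)}\in L_\infty([0,1],\ell_2)$ satisfies $\|f-g\|_\infty\geq \frac{1}{\sqrt2}$ for every finitely-valued measurable $g$, since some value of $g$ must serve infinitely many of the sets $[1/(n+1),1/n)$ while $\|e_n-e_m\|=\sqrt2$. As $f$ is in the range of a rank-one operator, your ``finite measurable partition $\{A_1,\dots,A_m\}$ with step functions $\eps$-close to all of $T(B_X)$'' need not exist, so the set $B$ is not produced by the argument you give. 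The gap is repairable: take a finite $\delta$-net $T(x_1),\dots,T(x_N)$ of $T(B_X)$; each $T(x_i)$ is essentially separably valued (Pettis), so its essential range is covered by countably many $\delta$-balls, yielding a \emph{countable} measurable partition on whose cells $T(x_i)$ has essential oscillation at most $2\delta$; the common refinement is a countable partition on whose cells every $T(x)$, $x\in B_X$, has essential oscillation at most $4\delta$, and since the partition is countable, one of its cells meets $\{s:\|T(x_0)(s)\|>1-\tilde\eta\}$ in a set $B$ of positive (and, by $\sigma$-finiteness, finite) measure. From that point on your averaging over $B$ and the patching $S=\chi_B\widetilde S+\chi_{B^c}T$ go through as you describe. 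Note that the route behind the paper's citation is different here: it feeds the conditional expectations associated with such \emph{countable} partitions, whose ranges are isometric to $\bigl[\bigoplus_{n}Y\bigr]_{\ell_\infty}$, into Proposition \ref{pointwise BPB property:comp:range} together with the $\ell_\infty$-sum stability, whereas your localization avoids the sum stability at the cost of the by-hand estimates; both hinge on exactly the countable-partition fact that your write-up replaced by the false density statement.
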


The proof is a simple adaptation of \cite[Theorem 3.15]{DGMM}, but let us note that in item (a), the proof given there only covers the case when $L_1(\mu)$ is infinite-dimensional, but the remaining case follows immediately from Corollary \ref{corollary:sums-range}.

\section{Some open problems}

In this last section, we compile some open problems about the contents of this paper.

Taking into account Proposition \ref{prop:abssumdomain:pointwise BPB property}, we do not know if it is possible to get analogous results for the Bishop-Phelps-Bollob\'as property or for the denseness of norm attaining operators.

\begin{problem} Let $X$, $Y$ be Banach spaces and let $X_1$ be a one-complemented subspace of $X$. Is it true that $(X_1, Y)$ has the BPBp whenever $(X, Y)$ satisfies it? Is it true that the set of norm attaining operators from $X_1$ to $Y$ is dense in $\mathcal{L}(X_1,Y)$ whenever the set of norm attaining operators from $X$ to $Y$ is dense in $\mathcal{L}(X,Y)$?
\end{problem}

Still in the sprint of Proposition \ref{prop:abssumdomain:pointwise BPB property}, we also do not know if such a general result holds true for ranges spaces.

\begin{problem} Let $X$ and $Y$ be Banach spaces. Let $Y_1$ be a one-complemented subspace of $Y$. Is it true that $(X, Y_1)$ has the pointwise BPB property whenever $(X, Y)$ satisfies it? It seems to be open also for the BPBp.
\end{problem}

By Corollary \ref{cor1}, we know that $L_p(\mu)$ is not a universal pointwise BPB domain space when $2 < p < \infty$ and $\mu$ is a positive measure such that $\dim(L_p(\mu))\geq 2$. We do not know what happens for $1 < p < 2$.

\begin{problem} Is the space $L_p(\mu)$ a universal pointwise BPB domain space for any $1 < p < 2$?
\end{problem}

As the proof of Corollary \ref{cor1} is not constructive, we do not know a concrete Banach space $Y$ such that the pair $(L_p(\mu), Y)$ fails the pointwise BPB property.

\begin{problem} Find a concrete example of a Banach space $Y$ so that the pair $(L_p(\mu), Y)$ fails the pointwise BPB property for $2 < p < \infty$.
\end{problem}

%

Finally, we do not know if there is any relationship between the pointwise BPB property and the pointwise BPB property for compact operators.

\begin{problem} Does the pointwise BPB property imply the pointwise BPB property for compact operators or viceversa?
\end{problem}

{\bf Acknowledgement.} The authors  are grateful to the anonymous referee for suggestions that improved the exposition.

\newpage

\end{document}